\definecolor{myred}{RGB}{251,154,133}
\definecolor{myblue}{RGB}{153,206,227}
\definecolor{mylightblue}{RGB}{0, 150, 255}
\definecolor{mygreen}{RGB}{32, 210, 64}
\definecolor{mygray}{RGB}{220, 220, 220}
\tikzset{snake it/.style={decorate, decoration=snake}}
\newtheorem{theorem}{Theorem}
\newtheorem{definition}{Definition}[section]
\newtheorem{lemma}[definition]{Lemma}
\newtheorem{remark}[definition]{Remark}
\newtheorem{proposition}[definition]{Proposition}
\newtheorem{corollary}[definition]{Corollary}
\newtheorem{question}{Question}
\def\beq{ \begin{equation} }
	\def\eeq{ \end{equation} }
\def\ep{\varepsilon}
\def\square{\vcenter{\vbox{\hrule height .4pt
			\hbox{\vrule width .4pt height 5pt \kern 5pt
				\vrule width .4pt} \hrule height .4pt}}}
\newcommand{\BC}{{\mathbb{C}}}
\newcommand{\BD}{{\mathbb{D}}}
\newcommand{\BH}{{\mathbb{H}}}
\newcommand{\BN}{{\mathbb{N}}}
\newcommand{\BR}{{\mathbb{R}}}
\newcommand{\BT}{{\mathbb{T}}}
\newcommand{\BZ}{{\mathbb{Z}}}
\newcommand{\CF}{{\mathcal{F}}}
\newcommand{\ind}{{\mathbbm{1}}}
\newcommand{\prob}{{\bf P}}
\newcommand{\bae}{\begin{equation}\begin{aligned}}
		\newcommand{\eae}{\end{aligned}\end{equation}}
\newcommand{\ev}{\mathbf{E}}
\DeclareFontFamily{OML}{rsfs}{\skewchar\font'177}
\DeclareFontShape{OML}{rsfs}{m}{n}{ <5> <6> rsfs5 <7> <8> <9>
	rsfs7 <10> <10.95> <12> <14.4> <17.28> <20.74> <24.88> rsfs10 }{}
\DeclareMathAlphabet{\mathfs}{OML}{rsfs}{m}{n}
\newcommand{\HL}{\text{HL}}
\newcommand{\ima}{\text{Im}}
\newcommand{\slitN}{\mathfs{S}^{N,\delta}}
\newcommand{\slitM}{\mathfs{S}^{M,\delta}}
\newcommand{\slit}{\varphi}
\newcommand{\SM}{\varphi}
\newcommand{\scal}{\delta (N,\lambda)}
\newcounter{relctr} 
\everydisplay\expandafter{\the\everydisplay\setcounter{relctr}{0}} 
\newcommand*{\hm}[1]{#1\nobreak\discretionary{}%
	{\hbox{$\mathsurround=0pt #1$}}{}}
\begin{document}
	
\title[Cylindrical Hastings Levitov]{Cylindrical Hastings Levitov}

\begin{abstract}
We define a Hastings-Levitov$(0)$ process on a cylinder and prove that the process converges to Stationary Hastings Levitov$(0)$ under appropriate particle size scaling that depends on the radius of the cylinder. The Stationary Hastings Levitov$(0)$ was shown by Berger, Procaccia and Turner to admit tight particle sizes, without a priori particle size normalization, thus it serves as a good model for the phenomenon of diffusion limited aggregation. Technical challenge, in this paper, is in taking the spatial limit together with the correct slit map normalization. This result also shows that the early life of the Hastings Levitov$(0)$ process in the small particle limit, spatially scaled so the slits have unit length, behaves like the Stationary Hastings Levitov$(0)$.
\end{abstract}

\author{Eviatar B. Procaccia}
\address[Eviatar B. Procaccia]{Technion - Israel Institute of Technology}
\urladdr{http://procaccia.net.technion.ac.il}
\email{procaccia@technion.ac.il}
	
\author{Anna Zhuchenko}
\address[Anna Zhuchenko]{Technion - Israel Institute of Technology}
\email{annazhu@campus.technion.ac.il}

\maketitle

\section{Introduction}

	Recently, Berger, Procaccia, and Turner \cite{berger2020growth,procaccia2021dimension} constructed a stationary version of the Hastings-Levitov model (SHL$(0)$) defined on the upper half-plane. They showed that unlike the Hastings Levitov process grown on a disk \cite{norris2012hastings}, particle sizes are tight without normalization, and thus SHL$(0)$ is a good candidate for an off-lattice version of \textit{stationary} DLA \cite{procaccia2018stationary,procaccia2020stationary,procaccia2019stationary,procaccia2021sets}. Moreover, the study provides results for the growth rate of the process, e.g., trees with $n$ particles reach a height of order $n^{2/3}$. This corresponds to the growth upper bound of Kesten \cite{MR915132,Kesten1987HowLA} and predictions of Meakin for DLA growth on a fiber \cite{meakin1983diffusion}. Given these exact results, a natural question is whether one can obtain such control in a finite system?

	In this paper, we construct rigorously SHL on a cylinder of finite width, denoted  CHL$^N$ (Figure \ref{Fig:simulation2}), and prove that as the width of the cylinder tends to infinity, the process converges to SHL$(0)$. This is an off-lattice version of the result in \cite{mu2019scaling}, where it is shown that the classical discrete DLA initiated in a long line segment or a discrete cylinder converges to the Stationary DLA process as the length of the initial line tends to infinity. This allows to use the exact growth bounds proved in \cite{berger2020growth} for the more physical spatially finite setting of SHL on a cylinder. Moreover, the main result of this paper also shows that HL$(0)$ in the small particle limit $\delta\to0$, grown up to time $t\delta^{-1}$, scaled around a point in $\BH$ such that particles are of size 1 (i.e. the curvature of the disk vanishes under this scaling), converges to SHL$(0)$.
	\begin{figure}[hbpt]
		\includegraphics[scale=0.6]{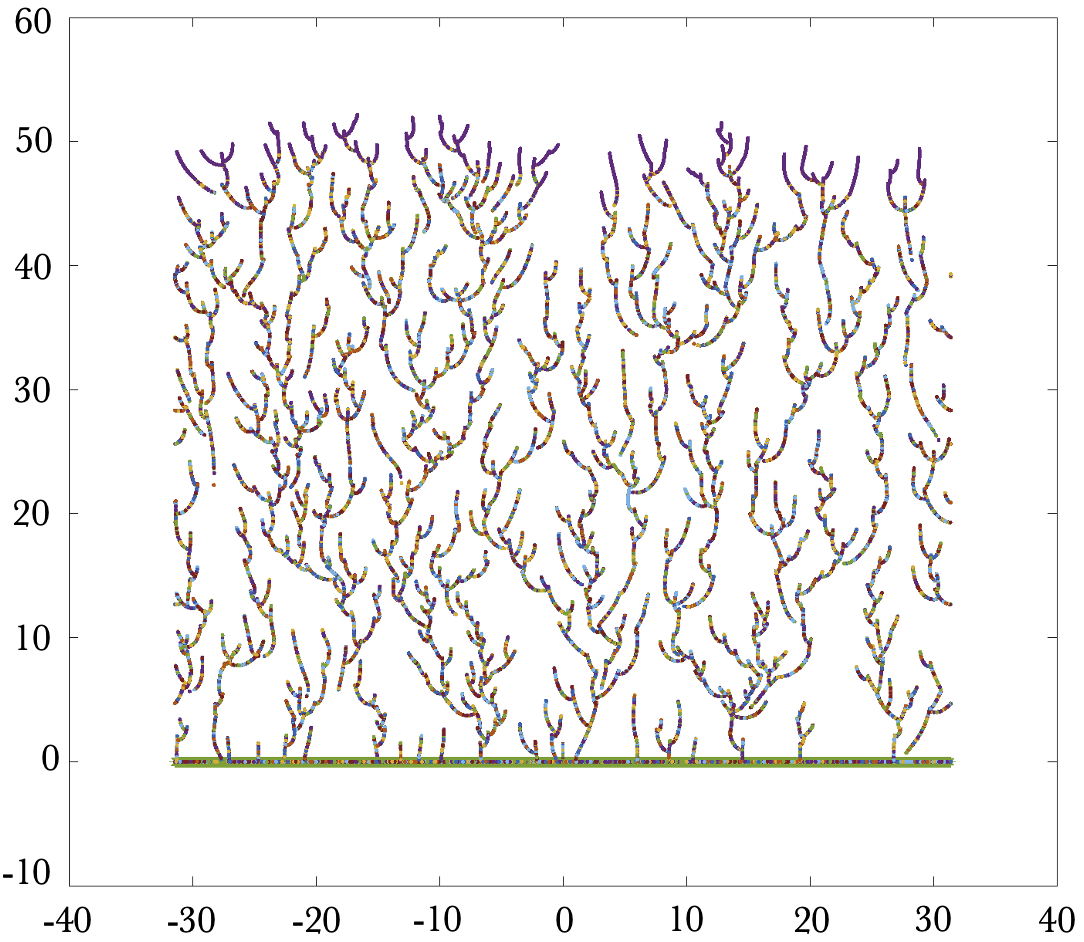}
		\centering
		\caption{Simulation of CHL$^N(0)$ in cylinder of the radius $10$}
		\label{Fig:simulation2}
	\end{figure}

\subsection{Open problems} We finish this section with some questions we find interesting and worthy of further study.

\begin{question}
One can define CHL$^N(\alpha)$, by normalizing the slit sizes according to the conformal radius. In \cite[Question 1]{berger2020growth} it is conjectured that SHL$(\alpha)$ is not well defined for $\alpha\in[1,2]$. Though not a full proof, it would be indicative to show that CHL$^N(\alpha)$ has no natural limiting process as $N\to\infty$, for any $\alpha\in[1,2]$.
\end{question}

\begin{question}
DLA in a cylinder enjoys vertical stationarity, since at each time with positive probability a horizontal line is formed within finitely many steps, regenerating the process. Does vertical stationarity hold for the CHL$^N$? 
\end{question}

\section{Stationary Hastings-Levitov}
For self containment and uniformity of definitions we recall the definition of the SHL$(0)$ appearing in \cite{berger2020growth}. 

Let $\mathcal P$ be the space of discrete measures on $[0,\infty)\times\BR$, equipped with the Borel $\sigma$-algebra corresponding to the weak topology w.r.t. continuous, compactly supported functions on $[0,\infty)\times\BR$. Let $\mathcal Z$ be the space of conformal maps from $\BH$ onto subsets of $\BH$, equipped with the topology of uniform convergence on compact subsets. Let $\mathcal X$ be the space of c\'adl\'ag functions from $[0,\infty)$ to $\mathcal Z$, with the Borel $\sigma$-algebra induced by the Skorohod topology (take some metric which admits the topology in $\mathcal Z$). For $F$ in $\mathcal X$ we write $F_t$ for the function at time $t$, and $F_t(z)$ for the value that this function takes at $z$. Occasionally we will talk about $t\mapsto F_t(z)$; this is the trajectory of the point $z$ as time progresses.

We let $\Omega={\mathcal P}\times{\mathcal X}$, with the product $\sigma$-algebra $\mathcal F$.
We denote an element $\omega$ of $\Omega$ as a pair $(P,F)$ where $P\in {\mathcal P}$ and $F:[0,\infty)\to \mathcal Z$.
A measure $\mu$ on $(\Omega, \mathcal F)$ is called a SHL$(0)$ process if it satisfies a number of requirements. Before stating those requirements, we need to define the particle map by which particles are added to the cluster.

\begin{definition}\label{def:slit_map}
The {\em slit map} $\varphi:\BH \to \BH\setminus [0,i\lambda]$ is defined as $\SM^\lambda(z)=\sqrt{z^2 - \lambda^2}$. For $x \in \BR$, the {\em slit map at $x$}, denoted $\varphi_x$, is the conjugation of $\SM$ by the shift in $x$, namely 
$\SM^\lambda_x(z)=x+\SM^\lambda(z-x)$. 
\end{definition}

\begin{definition}\label{def:BSHL}
A measure $\tilde{\mu}$ on $(\Omega, \mathcal F)$, corresponding to pair $(\tilde{P}, \tilde{F})$, is called a backward SHL$(0)$ process if it satisfies the following requirements. 
\begin{enumerate}
\item\label{item:back_marg_poisson}
(Poisson arrivals) The $\tilde{\mu}$ marginal distribution of $\tilde{P}$ is that of an intensity $1$ Poisson process.

\item\label{item:back_start_id}
(Initial condition) $\tilde{\mu}$-almost surely $\tilde{F}_0$ is the identity.

\item\label{item:back_adapt} 
(Adapted) For every $0 \leq s<t$, $ \tilde{F}_t \circ \tilde{F}_{s}^{-1} $ is $\tilde \CF_{s,t}$-measurable, where $\tilde \CF_{s,t} = \sigma(\tilde P|_{(s,t]\times\BR})$.

\item\label{item:back_not_move_between_jumps}
(Growth condition)
 Let $\tilde{A}$ be the set of Poisson points, i.e. the atoms of $\tilde{P}$, and let $\tilde{A}_{t}=\tilde{A}\cap\{(s,x) : s\leq t\}$ and
 $\tilde{A}_{t,n}=\tilde{A}\cap\{(s,x):|x|\leq n\ ;\ s\leq t\}$. Then $\tilde{\mu}$-almost surely, for every $t\in[0,\infty)$ and $z\in\BH$,
\begin{equation}\label{eq:back_around_middle}
\tilde{F}_t(z)= z + \lim_{n\to\infty} \sum_{(s,x)\in \tilde{A}_{t,n}} \left[ \slit_x^\lambda(\tilde{F}_{s-}(z)) - \tilde{F}_{s-}(z) \right].
\end{equation}

\label{item:back_last}
\end{enumerate}
\end{definition}
Note that we give the definition of the backward SHL$(0)$ as it is more amenable for analysis. However, for any fixed time the processes are equidistributed, and thus it is enough to consider the backward processes. More precisely, define $(P,F)$ to be SHL$(0)$ in a similar way to the backward SHL$(0)$ only changing the Adapted condition to $\forall0\le s<t$, $F_s^{-1}\circ F_t$ is $\sigma(P|_{(s,t]\times\BR})$ measurable, and the growth condition becomes 
$$
F_t(z)=z+ \lim_{n\to\infty} \sum_{(s,x)\in \tilde{A}_{t,n}} \left[ {F}_{s-}(\slit_x^\lambda(z)) - {F}_{s-}(z) \right]
.$$
Then one obtains (see \cite{{berger2020growth}}) on any fixed time interval $[0,T]$ that $$F_t=\tilde{F}_T\circ\left[\lim_{s\searrow t}\tilde{F}_{T-s}\right]^{-1}.$$
Recall also
\begin{proposition}\cite[Theorem 3.1+Theorem 4.3]{berger2020growth}
The limit in \eqref{eq:back_around_middle} exists in the topology of mean square convergence on compact subsets of $\BH$ and compact time intervals. The process is well defined, in the sense that there is a unique process in law satisfying the conditions of Definition \ref{def:BSHL}.

\end{proposition}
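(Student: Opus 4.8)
The plan is to construct the process explicitly as a spatial limit of finite-range approximations and then read off uniqueness from a contraction estimate. Fix $z\in\BH$ and $T<\infty$. For each $n$, let $\tilde F^{(n)}_t$ denote the map obtained by solving \eqref{eq:back_around_middle} with the sum restricted to $|x|\le n$. Since $\tilde P$ restricted to $[0,T]\times[-n,n]$ has almost surely finitely many atoms, $\tilde F^{(n)}$ is an honest finite backward composition $\slit^\lambda_{x_k}\circ\cdots\circ\slit^\lambda_{x_1}$ of slit maps, well defined pathwise without any limiting procedure. The whole problem then reduces to showing that $\tilde F^{(n)}_t(z)$ is Cauchy in $L^2$ as $n\to\infty$, uniformly for $z$ in compact subsets of $\BH$ and $t\in[0,T]$, and to identifying the law of the limit.

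The analytic heart is an a priori control of the single-point trajectory $w_t:=\tilde F^{(n)}_t(z)$. Writing the increment produced by a slit at $x$ as $\slit^\lambda_x(w)-w=\sqrt{(w-x)^2-\lambda^2}-(w-x)=-\lambda^2\big/\big(\sqrt{(w-x)^2-\lambda^2}+(w-x)\big)$, one sees it has size of order $\lambda^2/|w-x|$ once $|w-x|\gg\lambda$. Integrating this increment and its square against the Poisson intensity $ds\,dx$ (which is exactly the generator of the pure-jump process $t\mapsto w_t$) I would establish two facts: first, $\rea(w_t)$ is a martingale, since the horizontal drift $\int_\BR[\slit^\lambda_x(w)-w]\,dx$ is purely imaginary, while $\ima(w_t)$ carries a strictly positive drift of order $\lambda^2$, so $\ima(w_t)$ grows linearly and stays bounded below by a deterministic increasing function; second, the quadratic-variation rate of $\rea(w_t)$ is of order $\lambda^4/\ima(w_t)$, so the horizontal coordinate spreads only logarithmically and remains, with high probability, inside a bounded window on $[0,T]$. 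These estimates are what make distant slits negligible: a slit at $|x|$ much larger than the visited window contributes an increment of size $O(\lambda^2/|x|)$, and the total $L^2$-mass of such contributions over $[0,T]$ is controlled by $\lambda^4\,T\int_{|x|>m}|x|^{-2}\,dx\to0$.

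With these estimates in hand I would prove the Cauchy property by coupling $\tilde F^{(n)}$ and $\tilde F^{(m)}$ ($m<n$) on the same realization of $\tilde P$. Their trajectories coincide until a slit with $m<|x|\le n$ acts; thereafter the discrepancy is transported by subsequent slit maps, which are Lipschitz on the region actually visited thanks to the lower bound on $\ima(w_t)$. A Gronwall argument then bounds $\e\big[\,|\tilde F^{(n)}_t(z)-\tilde F^{(m)}_t(z)|^2\,\big]$ by the tail mass computed above, yielding the claimed mean-square convergence. Promoting convergence at a single $(z,t)$ to uniform convergence on compact subsets of $\BH\times[0,T]$ would follow from moment bounds on spatial and temporal increments combined with a Kolmogorov-type continuity argument, again using that the slit maps are smooth and uniformly Lipschitz away from the real axis.

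For the uniqueness statement, I would argue that the conditions of Definition \ref{def:BSHL} pin down $\tilde F$ as a measurable functional of $\tilde P$. Given two solutions driven by the same Poisson process, both satisfy the recursion \eqref{eq:back_around_middle}; subtracting and applying the same Lipschitz-plus-Gronwall estimate to $\e[\,|\tilde F_t(z)-\tilde F'_t(z)|^2\,]$ forces them to coincide almost surely. Hence every process satisfying the definition equals, in law, the explicit limit constructed above, and the Poisson marginal being prescribed, $\tilde\mu$ is unique. The main obstacle throughout is the a priori control in the second paragraph: because \eqref{eq:back_around_middle} is a composition rather than a sum of independent increments, the trajectory is genuinely path-dependent and its slit contributions cannot be treated as independent. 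Securing the lower bound on $\ima(w_t)$ and the slow horizontal spreading, uniformly in the truncation level $n$, is the crux, since every decay and Gronwall estimate downstream relies on the trajectory never approaching the real axis and never wandering far horizontally within a compact time window.
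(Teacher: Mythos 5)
This proposition is imported verbatim from \cite{berger2020growth} and the paper gives no proof of it, so the relevant comparison is with the cited proof and with the paper's own proof of the cylindrical analogue (Lemma \ref{lem:doob}, Lemma \ref{lem:3.3}, Theorem \ref{lem:cauchyA}); your strategy --- finite-range truncation so that $\tilde F^{(n)}$ is a finite composition, an $L^2$ tail estimate of order $\lambda^4\int_{|x|>m}|x|^{-2}\,dx$, a martingale/drift decomposition of the single-point trajectory, and a Lipschitz-plus-Gr\"onwall contraction used both for the Cauchy property and for uniqueness --- is essentially the same approach. One small correction that does not affect the argument: $\ima(w_t)$ is not bounded below by a deterministic \emph{increasing} function (the upward drift of order $\lambda^2$ holds only in expectation), but each slit map increases the imaginary part pathwise, so $\ima(w_t)\ge\ima(z)>0$, which is all your Lipschitz and tail estimates actually require.
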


\section{Definition of the CHL$^N$ Process}
	First we want to define a conformal slit mapping $\slitN_x(z)$, which attaches a slit over a point $x$ in the boundary of a cylinder of radius $N$, where $\delta$ is a parameter of the slit length.
	For that purpose we define the basic functions that form the cylinder slit map. Define the cylinder
	$${\BT^N=\{z\in\BC: \ima(z)>0\}}/{z\sim z+2N\pi},$$
	the complement of the unit disk
	$
	\BD_0=\{z\in \BC: |z|>1\},
	$
	and the upper half plane
	$
	\BH=\{z\in\BC: \ima(z)>0\}
	.$
	We denote the following functions: 
	\begin{align*}
		&f_N:\BT^N\to \BD_0     &&f_N(z)=e^{-i\frac{z}{N}},\\
		&f_N^{-1}:\BD_0\to \BT^N     &&f_N^{-1}(z)=iN\log(z),\\
		&g:\BD_0\to\BH &&g(z)=\frac{i(z-1)}{z+1},\\
		&g^{-1}:\BH\to\BD_0 &&g^{-1}(z)=\frac{i+z}{i-z},\\
		&\slit^\delta:\BH\to \BH\setminus [0,i\delta] &&\slit^\delta (z)=\sqrt{z^2(1-\delta^2)-\delta^2},\\
	\end{align*}
	Note that one can also use the complex tangent $$g\circ f_N(z)=\tan\left(\frac{z}{2N}\right).$$
	
	For two complex numbers $z,w\in\BC$, denote $[z,w]=\{tz+(1-t)w:t\in[0,1]\}$. Next we define the cylindrical slit map $\slitN:\BT^N\to \BT^N\setminus [0,i\lambda(\delta)]$ by taking
	$$
	\slitN(z)=f_N^{-1}\circ g^{-1}\circ \slit^{\delta}\circ g\circ f_N(z)
	.$$
	\begin{figure}[H]
		\begin{tikzpicture}
\tikzstyle{redcirc}=[circle,
draw=black,fill=myred,thin,inner sep=0pt,minimum size=3mm]
\tikzstyle{bluecirc}=[circle,
draw=black,fill=myblue,thin,inner sep=0pt,minimum size=5mm]
\usetikzlibrary{patterns}

\node (v1) at (-1,-0.3)  {\tiny{$-N\pi$}};
\node (v2) at (1,-0.3)  {\tiny{$N\pi$}};
\draw [ultra thick] (-1,0) to (1,0);
       (-1,2) -- (-1,0) -- (1,0)-- (1,2) -- cycle;
\draw[pattern=north west lines, pattern color=blue, opacity=0.2] (-1,0) rectangle (1,3);
\node (v4) at (0,0) [redcirc] {\tiny{$0$}};

\node (v3) at (1.5,1.5) {\tiny{$\overset{f}{\to}$}};

\draw[pattern=north west lines, pattern color=blue, opacity=0.2] (1.8,0) rectangle (4.2,3);
\filldraw[color=black!160, fill=white!5, ultra thick](3,1.5) circle (1);

\node (v5) at (4.6,1.5) {\tiny{$\overset{g}{\to}$}};

\draw[pattern=north west lines, pattern color=blue, opacity=0.2] (5,0) rectangle (8,2);
\draw [ultra thick] (5,0) to (8,0);

\node (v8) at (8.3,1.5) {\tiny{$\overset{\slit^\delta}{\to}$}};

\draw[pattern=north west lines, pattern color=blue, opacity=0.2] (8.7,0) rectangle (11.7,2);
\draw [ultra thick] (8.7,0) to (11.7,0);
\draw [ultra thick] (10.2,0) to (10.2,0.7);

\node (v8) at (10.2,-0.5) {\tiny{${\downarrow}{g^{-1}}$}};

\draw[pattern=north west lines, pattern color=blue, opacity=0.2] (8.7,-4) rectangle (11.7,-1);
\filldraw[color=black!160, fill=white!5, ultra thick](10,-2.5) circle (1);
\draw [ultra thick] (11,-2.5) to (11.4,-2.5);

\node (v8) at (7.9,-2.5) {\tiny{$\overset{f^{-1}}{\leftarrow}$}};

 \begin{scope}[shift={(6,-4)}]

\node (v11) at (-1,-0.3)  {\tiny{$-N\pi$}};
\node (v12) at (1,-0.3)  {\tiny{$N\pi$}};
\draw [ultra thick] (-1,0) to (1,0);
\draw[pattern=north west lines, pattern color=blue, opacity=0.2] (-1,0) rectangle (1,3);
\draw [ultra thick] (0,0) to (0,1.3);
\node (v14) at (0,1.3) [redcirc] {\tiny{$i \lambda$}};
\end{scope}

\draw [-stealth](0,-0.5) -- (4.5,-3);
\node (v15) at (2.7,-1.5)  {$\slitN$};

\node (v6) at (4,1.5) [redcirc] {\tiny{$1$}};
\node (v8) at (10.2,0.7) [redcirc] {\tiny{$i\delta$}};
\node (v7) at (6.5,0) [redcirc] {\tiny{$0$}};
\node (v9) at (11.4,-2.5) [redcirc] {};
\node (v10) at (11.4,-2.1)  {\tiny{$\frac{1+\delta}{1-\delta}$}};

\end{tikzpicture}
		\caption{The basic cylinder slit map $\slitN(z):\BT^N\to\BT^N\setminus[0,i\lambda]$.}		\label{basic_slit_map}
	\end{figure}
	Note that in \cite{berger2020growth} the slit map on $\BH$ is defined as $\sqrt{z^2-\delta^2}$. Here we need $i$ to be a fixed point for $\varphi^\delta(\cdot)$, otherwise infinity on the cylinder would not be mapped to infinity on the cylinder after adding the slit. However, we will see that the correction vanishes in the limit $N\to\infty$. 
	
		Also denote the rotation map $r_x:\BC\to\BC$, $r_x(z)=e^{ix/N}z$. The slit map attaching a slit of length $\lambda$ above $x\in\BT^N$, can be written as,
	\begin{equation}\label{eq:slitmapx}
		\slitN_x(z)=f_N^{-1}\circ r_x^{-1}\circ g^{-1}\circ \slit^{\delta}\circ g\circ r_x\circ f_N(z)
		,\end{equation}
	where the appropriate scaling parameter $\delta=\scal$, can be calculated by following the $0$ point, namely $\slitN(0)=i\lambda$ (see figure \ref{basic_slit_map}). Thus,
	
	\begin{equation}
		0\overset{f}{\to}1\overset{g}{\to}0\overset{\slit^{\delta}}{\to} i\delta\overset{g^{-1}}{\to}\frac{1+\delta}{1-\delta }\overset{f^{-1}}{\to}iN\log\left(1+\frac{2\delta}{1-\delta}\right)=i\lambda.
	\end{equation}
	We obtain
	\begin{equation}\label{eq:deltan}
	\scal=1-\frac{2}{1+e^{\frac{\lambda}{N}}}.
	\end{equation}

	For large $N$, or small $\lambda$, $\scal\approx \frac{\lambda}{2N}$. Note, that $\slitN_0=\slitN$. For ease of notation, since $\slitN(\cdot)$ is a non injective periodic function on $\BH$, we can also think of $\slitN(\cdot)$ as a conformal map from $\BH$ to $\BH\setminus \bigcup_{i\in\BZ}[i2\pi,i2\pi+i\lambda]$, mapping $x+iy$ for any $x\notin[-N\pi,N\pi]$, to $x+\slitN(x+iy)$. Thus we will make constant use of the identity for $z\in\BH$
\begin{equation}
\slitN_x(z)=\Re(z)+\slitN(z-x)-(\Re(z- x) \mod \pi N).
\end{equation}
And thus
\begin{equation}\label{eq:periodic}
\slitN_x(z)-z=\slitN(z-x)-(\Re(z-x) \mod \pi N)-i\Im(z).
\end{equation}

Now, let us define SHL$(0)$ in the cylinder. For the purpose of comparing CHL$^N$ and SHL$(0)$, we define the cylindrical process in continuous time, though, since the cylinder is finite one can define it also in discrete time. 
\begin{definition}\label{def:process}
	{Consider $\prob$, an intensity 1 Poisson point process on $\BH$.
		Let $A_{t,N}$ be the set of points distributed due to $\prob$ on $[-\pi N,\pi N)\times (0,t]$.
	}
	Almost surely, $\prob$ has finitely many points in any compact set. Hence we can write
	$ A_{t,\pi N}=\{(x_1,t_1),(x_2,t_2),\dots (x_n,t_n)\}$ s.t. $0\hm<t_1\hm<t_2<\dots \hm<t_n\leq t $ and $\forall i, -\pi N<x_i\leq\pi N$. CHL$^N$ is the c\'adl\'ag function $\mathfs{A}_t^{N,\delta}(z)$ s.t.
	\[ 
\mathfs{A}_t^{N,\delta}(z)=z+\sum_{(s,x)\in A_{t,N}}{[\mathfs{A}_{s^-}^{N,\delta}(\slitN_x(z))-\mathfs{A}_{s^-}^{N,\delta}(z)]} 
,\]
	which also can written as
	$$
	\mathfs{A}_s^{N,\delta}(z)=
	\begin{cases} 
		z & 0\leq s< t_1 \\
		\slitN_{x_1}(z) & t_1 \leq s < t_{2} \\
		\slitN_{x_1}\circ \slitN_{x_2}(z) & t_2 \leq s < t_{3} \\
		\vdots\\
	 	\slitN_{x_1} \circ \slitN_{x_2} \circ \dots \circ \slitN_{x_n}(z)&  t_n \leq s \leq t \\
	\end{cases}
	.$$
The backward CHL$^N$ process is defined by
\bae\label{eq:backcylprocess}
\tilde{\mathfs{A}}_t^{N,\delta}(z)=z+\sum_{(s,x)\in A_{t,N}}{[\slitN_x(\tilde{\mathfs{A}}_{s^-}^{N,\delta}(z))-\tilde{\mathfs{A}}_{s^-}^{N,\delta}(z)]} 
.\eae
\end{definition}
 Note that CHL$^N$ up to scaling and time change can be obtained by 
 \begin{remark}\label{rem:classicalHL_connection}
 Note that CHL$^N$ up to scaling and time change can be obtained by the classical HL$(0)$ using the construction given in \cite{norris2012hastings}. Let $\Psi_n^\delta=\psi_1^\delta \circ \psi_2^\delta \circ \dots \psi_n^\delta$, be the $\HL(0)$, where $\psi_i$ is a conformal map from $\BD_0 $ to a subset of $\BD_0 $, which corresponds to the i'th slit of size $\delta$. Since a half plane slit of length $\delta$ corresponds to a slit of size  $\frac{2\delta}{1-\delta}$ on the complement of disk (see Figure \ref{basic_slit_map}), we obtain that
	\begin{equation*}
		\begin{aligned}
			\tilde{\mathfs{A}}_n^{N,\delta}(z)
			& =f_N^{-1} \circ \psi_1^{\frac{2\delta}{1-\delta}} \circ f_N \circ f_N^{-1} \circ \psi_2^{\frac{2\delta}{1-\delta}}\circ f_N \circ \dots f_N^{-1} \circ \psi_n^{\frac{2\delta}{1-\delta}} \circ f_N (z) =\\
			& = f_N^{-1}\circ \psi_1\circ \psi_2 \circ \dots \psi_n \circ f_N (z) =\\
			& = f_N^{-1}\circ \Psi_n^{\frac{2\delta}{1-\delta}} \circ f_N(z). \\
		\end{aligned}	
	\end{equation*}

Thus the correct scaling to obtain CHL$^N$ from the HL$(0)$ for large $N$ by \eqref{eq:deltan}, is taking $\delta^N\approx \frac{\lambda}{2N}$ and time scaling $n=\lceil 2\pi Nt\rceil$ i.e. the expected number of slits in $A_{t,N}$.
\end{remark} 
\section{Results and Discussion}
%
%
	
\subsection{Main result}	
The main result of this paper states that under the appropriate scaling which fixes the slit size to $\lambda$, as $N\to\infty$ the CHL$^N$ process converges to the SHL$(0)$ process in the topology of mean square convergence on compact subsets of $\BH$ and compact time intervals. We use the natural coupling in which both processes use the same Poisson point process $A_t$, denoted $\prob$ with expectation $\ev$. 
\begin{theorem}\label{thm:main}
For any compact $[0,t]$ and $K\subset \BH$
$$
\ev\left[\sup_{s\leq t}\sup_{z\in K}\left|\mathfs{A}_s^{N,\delta}(z)-F_s^\lambda (z)\right|^2\right] \to 0
,$$
as $N \to \infty$ and $\delta=\delta(N, \lambda)$.
\end{theorem}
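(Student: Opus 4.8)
The plan is to pass to the backward processes $\tilde{\mathfs{A}}^{N,\delta}_s$ and $\tilde F_s$, coupled through the single Poisson process $\prob$; since the forward and backward laws agree at each fixed time and, under the coupling, the supremum over $s\le t$ is attained at one of the a.s.\ finitely many jump times, it suffices to prove the stated mean-square bound for the backward processes, the supremum then being controlled by a maximal inequality applied to the associated compensated sum. I would interpolate through an auxiliary process $\tilde G_s$, defined exactly as the backward SHL$(0)$ but driven only by the atoms of $\prob$ with $|x|<\pi N$ (the same atoms that drive CHL$^N$) and built from the half-plane slit maps $\slit^\lambda_x$. Writing
$$
\tilde{\mathfs{A}}^{N,\delta}_s(z) - \tilde F_s(z) = \bigl(\tilde{\mathfs{A}}^{N,\delta}_s(z) - \tilde G_s(z)\bigr) + \bigl(\tilde G_s(z) - \tilde F_s(z)\bigr),
$$
the first term is a pure slit-map-normalization error on a common configuration of atoms, and the second is the error from spatially truncating the driving noise of SHL$(0)$.

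For the truncation term I would show that removing the atoms with $|x|\ge\pi N$ perturbs the SHL$(0)$ solution by $o(1)$ in $L^2$ on $K$ and $[0,t]$. This is the mean-square stability underlying the cited existence result (\cite[Theorems 3.1 and 4.3]{berger2020growth}): a single increment satisfies $\slit^\lambda_x(w)-w = O(\lambda^2/|w-x|)$, so the individual tail contributions are small, and the two symmetric tails $x>\pi N$ and $x<-\pi N$ cancel to leading order, which is precisely what keeps both the compensated (martingale) part and the drift part of the removed mass small in $L^2$ as $N\to\infty$.

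For the normalization term I would run a discrete Grönwall estimate over the shared jump times $t_1<\dots<t_n$. At a jump at $(s,x)$ both processes apply a slit map to their current value, so
$$
\tilde{\mathfs{A}}^{N,\delta}_s - \tilde G_s = \bigl[\slitN_x(\tilde{\mathfs{A}}^{N,\delta}_{s^-}) - \slit^\lambda_x(\tilde{\mathfs{A}}^{N,\delta}_{s^-})\bigr] + \bigl[\slit^\lambda_x(\tilde{\mathfs{A}}^{N,\delta}_{s^-}) - \slit^\lambda_x(\tilde G_{s^-})\bigr],
$$
where the second bracket is the propagation of the past error, contracted by the local Lipschitz constant $1+O(\lambda^2/|w-x|^2)$ of $\slit^\lambda_x$, and the first is the per-jump discrepancy between the two slit normalizations. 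For the latter, the correct scaling $\delta=\scal\approx\lambda/(2N)$ and the identity \eqref{eq:periodic} enter through $g\circ r_x\circ f_N(w)=\tan(\tfrac{w-x}{2N})$, which yields, for $w-x$ in a bounded region,
$$
\slitN_x(w)-w = \sqrt{(w-x)^2-\lambda^2}-(w-x) + O\!\left(\tfrac{1}{N^2}\,p(w-x,\lambda)\right),
$$
so the discrepancy is $O(N^{-2})$ for each nearby atom, while for $|w-x|$ of order $N$ it is dominated crudely by the (already small) size of the increment. Summing the per-jump discrepancies in $L^2$ via the independence of the Poisson atoms, and iterating the contraction factors over the $\asymp Nt$ jumps, then produces a bound that vanishes as $N\to\infty$, provided the coupled trajectories do not spend too much time at small imaginary part — a regularity input I would import from \cite{berger2020growth}.

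The main obstacle is exactly this joint spatial-and-normalization control. The pointwise expansion of $\slitN_x$ above is accurate only while $(w-x)/N$ is small and degrades for atoms near the edge $|x|\approx\pi N$, whose periodic images on the universal cover are also closest; keeping the discrepancy summable in $L^2$ therefore requires matching, across every scale of $|w-x|$ up to order $N$, the degradation of the slit-map approximation against the decay and cancellation of the increments, while simultaneously ensuring the iterated Lipschitz constants stay bounded despite the near-singularity of $\slit^\lambda_x$ at the slit tips. Establishing that the imaginary part of the coupled trajectories cannot collapse too quickly — so that the $O(\lambda^2/|w-x|^2)$ Lipschitz corrections remain summable — is where the bulk of the technical work will lie, and is the precise sense in which the spatial limit and the slit normalization must be taken together.
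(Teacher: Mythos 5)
Your route is genuinely different from the paper's, and as written it has a real gap. The paper never compares CHL$^N$ against SHL$(0)$ directly: it shows the sequence $\{\tilde{\mathfs{A}}^{N,\delta}\}_N$ is Cauchy in mean square on compacts by comparing two \emph{cylindrical} processes with each other (Theorem \ref{lem:cauchyA}, via the Doob decomposition, Lemmas \ref{lem:3.3} and \ref{lem:uniformtail}, the stopping time of Corollary \ref{cor:stoptime}, and Gr\"onwall), then verifies that the limit satisfies axioms (1)--(4) of Definition \ref{def:BSHL}, and finally invokes the \emph{uniqueness in law} of SHL$(0)$ from \cite[Theorems 3.1 and 4.3]{berger2020growth} to identify the limit. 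That last step is the key idea your proposal is missing: it converts the hard problem of estimating $|\tilde{\mathfs{A}}^{N,\delta}_s - \tilde F_s|$ into a soft verification of defining properties, so no quantitative stability theory for SHL$(0)$ under truncation of its driving noise or perturbation of its slit maps is ever needed.

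The concrete gap in your normalization term is the treatment of far atoms. You propose to dominate the per-jump discrepancy for $|w-x|$ of order $N$ ``crudely by the (already small) size of the increment,'' but the integrated $L^2$ mass of the increments themselves, $\int_1^{\pi N}(\lambda^2/x)^2\,dx$, is bounded and \emph{not} small, so domination by the increment yields an $O(1)$ error, not an $o(1)$ one. What is actually required is that $\int_{-\pi N}^{\pi N}\bigl|(\slitN_x-\slit^\lambda_x)(w)\bigr|^2dx\to 0$ together with convergence of the drifts ($-i2\pi N^2\log(1-\delta^2)\to i\pi\lambda^2/2$, Lemma \ref{lem:pathpartition}); the paper obtains the former by splitting at an intermediate scale $\xi$, using the pointwise $C(z)/N$ bound of Lemma \ref{lem:conv_slit_fun} on $[0,\xi]$ and the uniform tail bound $c/\xi$ of \eqref{eq:tailest} beyond it. You also explicitly defer the two remaining technical pillars --- summability of the discrepancy across all scales and boundedness of the iterated Lipschitz constants of $\slit^\lambda_x$ near the slit tips over $\asymp Nt$ jumps --- so the argument is a plan with an acknowledged hole rather than a proof. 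Finally, your reduction of the supremum over $s\le t$ to ``the a.s.\ finitely many jump times'' fails on the SHL$(0)$ side, which has infinitely many atoms in $[0,t]\times\BR$; the paper instead controls the supremum through Doob's maximal inequality applied to the martingale part of the Doob decomposition.
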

\begin{remark}
An interesting interpretation of this result is that the classical HL$(0)$ model in the small particle limit $\delta\to0$  grown up to time $t\delta^{-1}$ appropriately scaled around a point on $\BH$ and normalized such that particles are of size $1$, locally converges to SHL$(0)$ (see Remark \ref{rem:classicalHL} for an explanation of the connection between the models). 
\end{remark}
\subsection{Ergodicity} In  \cite[Proposition 4.7]{berger2020growth} it is proved that the SHL$(0)$ is ergodic with respect to real shifts. In our cylindrical geometry it is actually easier to prove an equivalent statement. 
\begin{proposition}
For any $t>0$, $\mathfs{A}_t^{N,\delta}(z)$ is ergodic with respect to the cylinder shift $\eta_y(z):=f_N^{-1} \circ r_y\circ f_N(z)$.
\end{proposition}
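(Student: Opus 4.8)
The plan is to prove ergodicity in two stages: first establish that the law of $\mathfs{A}_t^{N,\delta}$ is invariant under conjugation by $\eta_y$, and then upgrade invariance to ergodicity by transferring the problem to the driving Poisson process, of which $\mathfs{A}_t^{N,\delta}$ is a measurable functional. The guiding simplification, and the reason this is cleaner than the half-plane argument of \cite[Proposition 4.7]{berger2020growth}, is that conjugating by $f_N$ turns $\eta_y$ into the rigid rotation $r_y$ of $\BD_0$; the shift thus acts on the boundary as a rotation of the circle, under which the arrival process is manifestly symmetric.

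For stationarity I would first record the exact equivariance of the slit maps. Using $r_a\circ r_b=r_{a+b}$ together with \eqref{eq:slitmapx} and $\eta_y=f_N^{-1}\circ r_y\circ f_N$, a direct computation gives
$$
\eta_y\circ\slitN_x\circ\eta_y^{-1}=\slitN_{x-y}.
$$
Composing this relation over the atoms $(t_1,x_1),\dots,(t_n,x_n)$ of $A_{t,N}$ shows that $\eta_y\circ\mathfs{A}_t^{N,\delta}\circ\eta_y^{-1}$ is exactly the cylindrical process driven by the rotated configuration $\{(t_i,x_i-y)\}$. Since the arrival process is an intensity-$1$ Poisson process on the cylinder, it is invariant under the rotation $x\mapsto x-y$, so $\eta_y\circ\mathfs{A}_t^{N,\delta}\circ\eta_y^{-1}$ and $\mathfs{A}_t^{N,\delta}$ have the same law, which is the desired stationarity.

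To pass from stationarity to ergodicity I would show that the shift acts ergodically on the source of randomness and then invoke that a factor of an ergodic system is ergodic. Concretely, I would lift the picture to the universal cover $\BH$, where $\eta_y$ becomes the genuine translation $z\mapsto z-y$ acting on the translation-invariant intensity-$1$ Poisson process $\prob$. Spatial translations of a Poisson process are mixing: for bounded observables depending on the configuration in bounded windows, independence over disjoint regions forces the correlations to factorize as the windows are driven apart, and mixing implies ergodicity. If $\mathfs{A}_t^{N,\delta}$ can be realized as a functional of $\prob$ that intertwines cylinder conjugation with cover translation, then any $\eta_y$-invariant event for the process pulls back to a translation-invariant event for $\prob$, and hence has probability $0$ or $1$.

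The main obstacle is precisely this last identification. The cylindrical map is assembled from the atoms in a single fundamental domain $[-\pi N,\pi N)$, so cylinder conjugation by $\eta_y$ rotates the atoms \emph{within} that domain, whereas the decorrelation mechanism is naturally a statement about genuine translations on the non-compact cover, which instead feed in fresh atoms through the seam of the fundamental domain. Reconciling these two actions — controlling the wrap-around contribution of atoms crossing the seam, which is exactly the periodic correction recorded in \eqref{eq:periodic}, and verifying that the intertwining survives after this correction — is the delicate step. Once the intertwining is in place, the remaining ingredients are the routine equivariance computation above and the standard Poisson mixing estimate.
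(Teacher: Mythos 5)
Your equivariance identity $\eta_y\circ\slitN_x\circ\eta_y^{-1}=\slitN_{x-y}$, iterated over the atoms of $A_{t,N}$, is exactly the computation in the paper's proof (its display \eqref{eq:ergodic}), and your overall strategy --- CHL$^N$ is a shift-equivariant factor of the driving Poisson process, and a factor of an ergodic system is ergodic --- is the whole of the paper's argument. Up to that point the two proofs coincide; the stationarity you derive first is subsumed in the equivariance together with the rotation invariance of the Poisson law.

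The gap is the step you yourself label ``the delicate step'' and then leave open. The paper never leaves the cylinder: the relevant action is the rotation of the circle $[-\pi N,\pi N)$ on the Poisson process on $[-\pi N,\pi N)\times(0,\infty)$, and \eqref{eq:ergodic} says the factor map is equivariant for that action; no universal cover appears. Your detour through $\BH$ cannot be completed in the form you describe: the driving process of CHL$^N$ genuinely lives on the circle, and there is no shift-equivariant way to obtain it from a translation-invariant Poisson process on $\BR\times(0,\infty)$ --- projecting modulo $2\pi N$ destroys local finiteness, while restricting to a fundamental domain destroys equivariance --- so the intertwining you are counting on does not exist and the mixing of $\BR$-translations is unavailable. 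What must actually be supplied is ergodicity of the action of the compact rotation group $\BR/2\pi N\BZ$ on the cylinder Poisson process, which is a genuinely different and more delicate statement than mixing of line translations; your unease here is in fact well founded, since for instance the number of atoms in $[-\pi N,\pi N)\times(0,t]$ is a rotation-invariant, non-constant observable of the driving process, so one has to explain why no such invariant descends to the factor $\mathfs{A}_t^{N,\delta}$, or make precise the sense in which ergodicity is claimed. As written, your proof rests on an identification you acknowledge you have not established, so it is incomplete.
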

\begin{proof}
We prove ergodicity by showing that the CHL$^N$ is a factor of the ergodic intensity 1 Poisson point process. Thus it is enough to prove that the function that takes the Poisson point process on $\BT^N$ and maps it to $\mathfs{A}_t^{N,\delta}(z)$ commutes with $\eta_y(\cdot)$. For any $n\in\BZ$ and $y, x_1,\cdots,x_n\in [-N\pi,N\pi)$,
\bae\label{eq:ergodic}
&\eta^{-1}_y\circ\slitN_{x_1} \circ \slitN_{x_2} \circ \dots \circ \slitN_{x_n}(\eta_y(z))\\
&=\eta^{-1}_y\circ f_N^{-1}\circ r_{x_1}^{-1}\circ g^{-1}\circ \slit^{\delta}\circ g\circ r_{x_1}\circ \cdots \circ r_{x_n}^{-1}\circ g^{-1}\circ \slit^{\delta}\circ g\circ r_{x_n}\circ f_N(\eta_y(z))\\
&=f_N^{-1}\circ r^{-1}_y\circ r_{x_1}^{-1}\circ  g^{-1}\circ \slit^{\delta}\circ g\circ r_{x_1}\circ r_y\circ \cdots \circ r^{-1}_y \circ r_{x_n}^{-1}\circ g^{-1}\circ \slit^{\delta}\circ g\circ r_{x_n}\circ r_y \circ f_N(z)\\
&=f_N^{-1}\circ r_{x_1+y}^{-1}\circ  g^{-1}\circ \slit^{\delta}\circ g\circ r_{x_1+y}\circ \cdots \circ r_{x_n+y}^{-1}\circ g^{-1}\circ \slit^{\delta}\circ g\circ r_{x_n+y}\circ f_N(z)\\
&=\slitN_{x_1+y} \circ \slitN_{x_2+y} \circ \dots \circ \slitN_{x_n+y}(z)
,\eae
where in the second equality we composed $n-1$ times with the identity function $r_y\circ r^{-1}_y(\cdot)$.
The last line of \eqref{eq:ergodic} is exactly the CHL$^N$ process constructed with the points $(\eta_y(x_1),t_1),\ldots, (\eta_y(x_n),t_n)$.

\end{proof}

\section{Cylinder Slit Function Estimates}
Before proving Theorem \ref{thm:main}, we state in this section and prove in the appendices some estimates on the deterministic slit maps.
\begin{lemma}\label{lem:conv_slit_fun}
		For every  $z\in\mathbb{T}^N$, there is a $C(z)<\infty$ such that for any $N$ large enough
		$$
		\left|\slitN_0(z)-\sqrt{z^2-1}\right| \leq \frac{C(z)}{N}
		,$$
		whenever $\delta=\delta(N,\lambda)$.
\end{lemma}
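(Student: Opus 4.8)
The plan is to collapse the five-fold composition defining $\slitN_0$ into a single closed form and then expand it asymptotically in $1/N$. First I would record the algebraic simplification
\[
f_N^{-1}\circ g^{-1}(v)=iN\log\tfrac{i+v}{i-v}=2N\arctan v
\]
(using $\log\frac{i+v}{i-v}=-2i\arctan v$ near $v=0$), which combined with the stated identity $g\circ f_N(z)=\tan(z/2N)$ yields
\[
\slitN_0(z)=2N\arctan\!\Big(\slit^\delta\big(\tan(z/2N)\big)\Big)=2N\arctan\!\Big(\sqrt{\tan^2(z/2N)(1-\delta^2)-\delta^2}\Big).
\]
For fixed $z$ and large $N$ every argument here is $O(1/N)$ and lives in a neighbourhood of $0$ on which the principal branches of $\tan$, $\arctan$ and $\log$ agree with the conformal maps; the branch of the square root is the one making $\slit^\delta$ map $\BH\to\BH$, which is exactly the branch used by the target map $\SM^\lambda$.

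Next I would expand the ingredients with explicit remainders. Reading $\delta=\frac{\lambda}{2N}+O(N^{-3})$ off the formula $\scal=1-\frac{2}{1+e^{\lambda/N}}$ (the $N^{-2}$ term cancels) and using $\tan(z/2N)=\frac{z}{2N}+O_z(N^{-3})$, the quantity under the square root becomes
\[
A:=\tan^2(z/2N)(1-\delta^2)-\delta^2=\underbrace{\tfrac{z^2-\lambda^2}{4N^2}}_{=:B}+O_z(N^{-4}),
\]
whose $\BH$-branch root has leading term $\sqrt B=\frac{1}{2N}\sqrt{z^2-\lambda^2}$. I would then propagate this through the last two maps: writing $\sqrt A-\sqrt B=\frac{A-B}{\sqrt A+\sqrt B}$ with $|\sqrt A+\sqrt B|\ge c(z)/N$ gives $|\sqrt A-\sqrt B|=O_z(N^{-3})$, and the elementary bound $|\arctan\xi-\xi|\le C|\xi|^3$ at $\xi=\sqrt A=O_z(N^{-1})$ gives $|\arctan(\sqrt A)-\sqrt A|=O_z(N^{-3})$. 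Assembling,
\[
\big|\slitN_0(z)-\sqrt{z^2-\lambda^2}\big|=2N\big|\arctan(\sqrt A)-\sqrt B\big|\le 2N\,O_z(N^{-3})=O_z(N^{-2}),
\]
which is in particular $\le C(z)/N$; for the unit-length normalisation $\lambda=1$ of the statement this limit is exactly $\sqrt{z^2-1}$.

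The main obstacle is the square-root step. I must ensure that the perturbation of $A$ about $B$ never crosses the branch point $0$ of the root and that $\sqrt A$ and $\sqrt B$ sit on the same sheet, so that the difference formula $\frac{A-B}{\sqrt A+\sqrt B}$ does not suffer a spurious cancellation in its denominator; since $B$ is of order $N^{-2}$ while $A-B$ is of order $N^{-4}$, the segment from $B$ to $A$ stays clear of $0$ once $N\ge N_0(z)$. This is also where the $z$-dependence of the constant is forced: the lower bound $|\sqrt A+\sqrt B|\ge c(z)/N$ degenerates as $z\to\pm\lambda$ (equivalently $z\to\pm1$), because $\sqrt{z^2-\lambda^2}\to0$ at the branch points of $\SM^\lambda$, so $C(z)$ must blow up there, while remaining finite for any fixed $z\in\BT^N$ with $z\neq\pm\lambda$. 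A secondary bookkeeping point is to make the remainders in the expansions of $\tan$, $\arctan$ and $(1+\cdot)^{1/2}$ uniform on the relevant $O(1/N)$ neighbourhood, so that all the $O_z(\cdot)$ constants can be absorbed into a single $C(z)$ valid for all large $N$.
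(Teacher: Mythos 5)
Your argument is correct and is essentially the paper's own proof: the paper likewise takes $\delta(N,\lambda)\approx\lambda/2N$ and Taylor-expands the composition $f_N^{-1}\circ g^{-1}\circ\slit^{\delta}\circ g\circ f_N$ in powers of $1/N$, only it does so map by map rather than first collapsing everything into the single closed form $2N\arctan\bigl(\slit^{\delta}(\tan(z/2N))\bigr)$. Your packaging is a little tidier (the absence of a quadratic term in $\arctan$ makes the $O(N^{-2})$ cancellation visible, where the paper only records $O(1/N)$), and your remarks on branch consistency and on the degeneration of $C(z)$ near $z=\pm\lambda$ are sensible refinements rather than departures.
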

	
Proof of the Lemma \ref{lem:conv_slit_fun} appears in Appendix \ref{AppA}.

	The following lemma is the cylinder versions of \cite[Lemma 3.3]{berger2020growth}. The proof of the Lemma appears in Appendix \ref{appendix:deterministic}.
	
	\begin{lemma}\label{lem:3.3}
	There exists a constant $C>0$ such that for every $z\in\mathbb{T}^N$
	\begin{align}
		\int_{-N\pi}^{N\pi}|\slitN_x(z)-z|^2dx<C \label{eq:lem_3.3_1} \\
		\int_{-N\pi}^{N\pi}\left|\frac{d}{dz}\slitN_x(z)-1\right|^2dx<C \label{eq:lem_3.3_2}
	\end{align}
	\end{lemma}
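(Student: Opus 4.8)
The plan is to prove both integral bounds by leveraging the explicit factorization $\slitN_x = f_N^{-1}\circ r_x^{-1}\circ g^{-1}\circ \slit^\delta\circ g\circ r_x\circ f_N$ together with the convergence established in Lemma \ref{lem:conv_slit_fun}. The underlying heuristic is that, by Lemma \ref{lem:conv_slit_fun}, the cylinder slit map $\slitN_0(z)$ is within $O(1/N)$ of the half-plane slit map $\sqrt{z^2-1}$, which is precisely the object whose analogous integral bounds were established in \cite[Lemma 3.3]{berger2020growth}. Since the integration region $[-N\pi, N\pi]$ grows with $N$, the key observation is that the factor $O(1/N)$ error, when its square is integrated over a region of length $2N\pi$, contributes a term of size $O(1/N)$ which remains uniformly bounded.

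First I would reduce the $x$-integral of $\slitN_x$ to a statement about a single translated map. Using the periodicity identity \eqref{eq:periodic}, namely
$$
\slitN_x(z)-z=\slitN(z-x)-(\Re(z-x) \bmod \pi N)-i\Im(z),
$$
I can express the integrand $|\slitN_x(z)-z|^2$ in terms of the centered map $\slitN$ evaluated at the shifted argument $z-x$. As $x$ ranges over $[-N\pi, N\pi]$, the point $z-x$ traverses a horizontal segment of length $2N\pi$ at fixed height $\Im(z)$. The strategy is then to split this segment into the bounded region near the origin, where $\slitN(w)-w$ is genuinely of order one (this is where the slit perturbs the map significantly), and the far region, where $\slitN(w)$ is close to the identity and the integrand decays. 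For the analogous half-plane map $\sqrt{w^2-1}$, one has $\sqrt{w^2-1}-w = O(1/|w|)$ for large $|w|$, so $|\sqrt{w^2-1}-w|^2 = O(1/|w|^2)$, giving a convergent tail integral; the plan is to transfer this decay to $\slitN$ via Lemma \ref{lem:conv_slit_fun} and obtain \eqref{eq:lem_3.3_1}.

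For the derivative bound \eqref{eq:lem_3.3_2}, the approach is parallel but requires controlling $\frac{d}{dz}\slitN_x(z)-1$. I would either differentiate the periodicity identity (noting the $\bmod$ term and $i\Im(z)$ term drop out under $\frac{d}{dz}$, leaving $\frac{d}{dz}\slitN(z-x)$) and compare with the derivative of $\sqrt{w^2-1}$, which equals $w/\sqrt{w^2-1} = 1 + O(1/|w|^2)$ for large $|w|$, giving $\big|\frac{d}{dw}\sqrt{w^2-1}-1\big|^2 = O(1/|w|^4)$ and hence a convergent tail. The main technical point is to establish a $1/N$-type estimate for the \emph{derivative} $\frac{d}{dz}\slitN(z) - \frac{d}{dz}\sqrt{z^2-1}$ analogous to Lemma \ref{lem:conv_slit_fun}; this should follow from a Cauchy-estimate argument applied to the $O(1/N)$ bound on the maps themselves, since both are holomorphic, so control of the functions on a slightly larger compact set yields control of their derivatives.

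The main obstacle I anticipate is making the bounds \emph{uniform in $N$} over the growing integration region. Lemma \ref{lem:conv_slit_fun} provides a pointwise bound with a constant $C(z)$ depending on $z$, but to integrate over $x\in[-N\pi,N\pi]$ I need control that is uniform in the shifted variable $z-x$ as it moves far from the slit, and I must verify that the $z$-dependence of $C(z)$ does not blow up near the real axis or as $|z|\to\infty$. The resolution should come from a careful two-regime analysis: near the slit the error constant is harmlessly bounded, while far from the slit both $\slitN$ and $\sqrt{\cdot^2-1}$ are close to the identity with quantitatively matching decay rates, so the squared integrand is dominated by an integrable tail uniformly in $N$. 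Assembling these two regimes and confirming that the $O(1/N)$ contribution over the $O(N)$-length region stays bounded is the crux of the argument.
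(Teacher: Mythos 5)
There is a genuine gap, and it sits exactly where you locate the ``crux'': the tail regime. Your plan is to transfer the bound from the half-plane slit map to $\slitN$ via Lemma \ref{lem:conv_slit_fun}, but that lemma is a fixed-$z$, large-$N$ asymptotic whose proof rests on the Taylor expansions $e^{-iz/N}\approx 1-iz/N$ and $iN\log(1+u)\approx iNu$, valid only when $|z|/N$ is small. After the reduction via \eqref{eq:periodic}, the shifted argument $z-x$ sweeps the full width of the cylinder, and for $|\Re(z-x)|$ of order $N\pi$ the constant $C(z-x)$ in Lemma \ref{lem:conv_slit_fun} blows up (near the antipode $f_N$ sends the point close to $-1$ and $g\circ f_N$ to infinity, so the expansions underlying the lemma fail entirely). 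Consequently the arithmetic ``$(C/N)^2\times 2\pi N=O(1/N)$'' is not available over most of the integration range, and your fallback claim that far from the slit both $\slitN$ and $\sqrt{(\cdot)^2-\lambda^2}$ are ``close to the identity with quantitatively matching decay rates'' is precisely the statement that needs proof; it is not a consequence of anything you have established, and it is where the cylinder geometry genuinely differs from the half plane (near the antipode $\varphi^\delta$ acts as a multiplicative perturbation $u\mapsto u\sqrt{1-\delta^2}$ at infinity, and the decay of $\slitN(w)-w$ there is governed by $\delta^2$ and the derivative of $(g\circ f_N)^{-1}$, not by $1/|w|$).

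The paper closes this gap by working directly with the conjugated map rather than with a comparison: it uses the scaling relation $|\varphi^\delta_0(u)-u|^2=\delta^2\bigl|\sqrt{u^2(\delta^{-2}-1)-1}-u/\delta\bigr|^2$ and the explicit two-regime bounds \eqref{eq:smallx} and \eqref{eq:bigx} imported from \cite[Appendix A]{berger2020growth}, then controls the change of variables through $g\circ f_N$ using the monotonicity of its derivative, yielding the uniform tail estimate \eqref{eq:tailest}. (Note the paper itself only ever invokes Lemma \ref{lem:conv_slit_fun} on bounded windows $[0,\xi]$, never over the full circumference.) Your treatment of \eqref{eq:lem_3.3_2} has the same structural problem, plus an additional one: a Cauchy estimate converting the $O(1/N)$ bound on the maps into a bound on derivatives costs a factor $1/r$ for a disk of radius $r$ contained in $\BH$, which degenerates as $\Im(z)\to 0$; the paper instead computes $\frac{d}{dz}\slitN$ in closed form and bounds it via \eqref{eq:blowpart} and \eqref{eq:tailpart}. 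To repair your argument you would need to prove, independently of Lemma \ref{lem:conv_slit_fun}, a decay estimate for $\slitN(w)-w$ valid uniformly for $\Re(w)$ up to $\pm N\pi$ --- at which point you have essentially reconstructed the paper's proof.
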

	Equation \eqref{eq:tailest} in the proof of Lemma \ref{lem:3.3} also gives us 
	\begin{lemma}\label{lem:uniformtail}
For every $\epsilon>0$ and any $z\in\mathbb{T}^N$, there exists an $N>0$ such that for any $M>N$,
$$
\int_{N\pi}^{M\pi}|\slitM_x(z)-z|^2dx<\epsilon
.$$
\end{lemma}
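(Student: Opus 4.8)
The plan is to reduce the statement to a pointwise decay estimate for the tail displacement $\slitM_x(z)-z$ and then integrate; this decay is exactly what is produced as \eqref{eq:tailest} while bounding \eqref{eq:lem_3.3_1}, so the only genuinely new point is that the resulting bound is \emph{uniform in the cylinder radius} $M$ and tends to $0$ as the inner cutoff $N\pi$ grows. Throughout I fix $z$ in a compact subset of $\BH$ and take $\delta=\delta(M,\lambda)$, so that $M\delta^2\asymp\lambda^2/M$ by \eqref{eq:deltan}; the claimed $N$ will be chosen depending only on $z,\lambda,\epsilon$.

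First I would pass to the tangent coordinate of the text, $g\circ r_x\circ f_M(z)=\tan\!\big(\tfrac{z-x}{2M}\big)=:w(x)$, whose inverse is $u\mapsto x+2M\arctan u$. Since $\slitM_x=(g\circ r_x\circ f_M)^{-1}\circ\slit^\delta\circ(g\circ r_x\circ f_M)$, this gives the exact identity
\[
\slitM_x(z)-z \;=\; 2M\big[\arctan\!\big(\slit^\delta(w)\big)-\arctan(w)\big]\;=\;2M\int_{w}^{\slit^\delta(w)}\frac{d\zeta}{1+\zeta^2},
\]
the integral taken along the short segment $[w,\slit^\delta(w)]$. On the tail $x\in[N\pi,M\pi]$ one has $|w(x)|\gtrsim\tan\!\big(\tfrac{N\pi}{2M}\big)\gg\delta$ as soon as $N\gg\lambda$, so $\slit^\delta(w)=\sqrt{w^2(1-\delta^2)-\delta^2}$ lies on the same sheet as $w$ with $|\slit^\delta(w)-w|\asymp\delta^2(1+|w|^2)/|w|$, while $|1+\zeta^2|\asymp 1+|w|^2$ along the segment. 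The crucial cancellation is that the naive growth $|\slit^\delta(w)-w|\asymp\delta^2|w|$ at large $|w|$ is exactly undone by the contracting factor $1+\zeta^2$, leaving
\[
\big|\slitM_x(z)-z\big|\;\lesssim\;\frac{M\delta^2}{|w(x)|}\;\lesssim\;\frac{\lambda^2}{M\,|w(x)|}.
\]

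Next I would integrate. For $x\in[N\pi,M\pi]$ and $z$ in a compact set, $\Im\!\big(\tfrac{z-x}{2M}\big)=\tfrac{\Im z}{2M}\to0$, so $|w(x)|^2\to\tan^2\!\big(\tfrac{x-\Re z}{2M}\big)$ and the substitution $u=\tfrac{x}{2M}$ gives
\[
\int_{N\pi}^{M\pi}\big|\slitM_x(z)-z\big|^2\,dx\;\lesssim\;\frac{\lambda^4}{M}\int_{N\pi/2M}^{\pi/2}\cot^2u\,du\;=\;\frac{\lambda^4}{M}\Big(\cot\tfrac{N\pi}{2M}+\tfrac{N\pi}{2M}-\tfrac{\pi}{2}\Big)\;\lesssim\;\frac{\lambda^4}{M}\cdot\frac{2M}{N\pi}\;=\;\frac{C\lambda^4}{N}.
\]
The factors of $M$ cancel, so the bound depends only on $N$; choosing $N$ with $C\lambda^4/N<\epsilon$ proves the lemma for every $M>N$ simultaneously.

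The step requiring genuine care — and the main obstacle — is justifying the pointwise bound uniformly in the antipodal region $x\to M\pi$, where $w(x)\to\infty$ and, for $z$ near the imaginary axis, $w$ runs out towards $i\infty$, i.e. along the branch cut $i[1,\infty)$ of $\arctan$. There one cannot expand $\arctan(\slit^\delta(w))-\arctan(w)$ termwise. The resolution is to keep the contour form above: $\slit^\delta$ moves $w$ by only $O(\delta^2|w|)$ and keeps it on the same side of the cut, so integrating $\tfrac{1}{1+\zeta^2}$ over the connecting segment (on which $|1+\zeta^2|\asymp|w|^2$) still yields $|\slitM_x(z)-z|\lesssim M\delta^2/|w|$; moreover $\cot^2u\to0$ there, so this region contributes negligibly. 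Checking that $\slit^\delta$ never pushes $w$ across the cut and that $1+\zeta^2$ stays comparable to $w^2$ along the segment, uniformly in $M$, is the one computation to verify carefully; the remainder is the change of variables above, which is the content of \eqref{eq:tailest}.
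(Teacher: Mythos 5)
Your proof is correct and follows essentially the same route as the paper, which obtains the lemma directly from the tail estimate in \eqref{eq:tailest}: a pointwise bound of order $1/x$ on the displacement $|\slitM_x(z)-z|$ for $x$ in the tail (via \eqref{eq:bigx} and the $\arctan\circ\tan$ conjugation), integrated to give a bound of order $1/N$ uniform in $M$. Your contour-integral bookkeeping of the conjugation and the explicit verification that the bound is uniform in $M$ are more careful than the paper's one-line reference, but the underlying estimate is the same.
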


Next we control the slit map at infinity.

\begin{lemma}\label{lem:rate}
For any $N>0$, as $Im(z)\to\infty$, $z\in \mathbb{T}^N$,  
$$
\slitN_0 (z)=z-iN\log (1-\delta^2)+iN\frac{\delta^2}{e^{-iz\backslash N}}+o\left(e^{-|z|}\right)
.$$
\end{lemma}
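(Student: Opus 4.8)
The plan is to unfold the definition $\slitN_0=f_N^{-1}\circ g^{-1}\circ\slit^\delta\circ g\circ f_N$ and collapse the whole composition onto the single variable
\[
u:=e^{iz/N}=\frac{1}{f_N(z)},
\]
noting that the hypothesis $\Im(z)\to\infty$ (with $\Re(z)\in(-N\pi,N\pi]$) is precisely $u\to0$, since $|u|=e^{-\Im(z)/N}$. The two M\"obius factors are rational and can be composed explicitly: a short computation gives $\zeta:=g(f_N(z))=i\tfrac{1-u}{1+u}$, which tends to $i$; and since $\slit^\delta(i)=\sqrt{-(1-\delta^2)-\delta^2}=i$, the point $i$ is exactly the fixed point of $\slit^\delta$ advertised in the text, so the argument fed to $\slit^\delta$ approaches that fixed point as $\Im(z)\to\infty$.

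First I would rationalise the square root. Substituting $\zeta$ into $\slit^\delta$ and clearing denominators yields
\[
\slit^\delta(\zeta)=\frac{\sqrt{Q(u)}}{1+u},\qquad Q(u)=-1+2(1-2\delta^2)u-u^2,
\]
where the relevant branch is pinned down by continuity from $\sqrt{Q(0)}=\sqrt{-1}=i$ (the branch carrying $\zeta\approx i$ to its image near $i$). Applying the remaining M\"obius map $g^{-1}$ gives
\[
w':=(g^{-1}\circ\slit^\delta\circ g\circ f_N)(z)=\frac{i(1+u)+\sqrt{Q(u)}}{i(1+u)-\sqrt{Q(u)}},
\]
which has a simple pole at $u=0$ because the denominator vanishes there; consequently $w'u$ is holomorphic and non-vanishing near $u=0$.

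The key structural point is that $f_N^{-1}=iN\log(\cdot)$ turns the composition into an additive correction: since $w=f_N(z)=1/u$,
\[
\slitN_0(z)-z=iN\bigl(\log w'-\log w\bigr)=iN\log\!\bigl(w'u\bigr).
\]
It therefore suffices to expand $w'u$ in powers of $u$ about $0$. Taylor expanding $\sqrt{Q(u)}=i\sqrt{1-2(1-2\delta^2)u+u^2}$ and inserting it, the numerator and denominator of $w'$ factor as $2i\bigl(1+\delta^2u+\cdots\bigr)$ and $2i(1-\delta^2)u\bigl(1-\delta^2u+\cdots\bigr)$, so that $w'u=\tfrac{1}{1-\delta^2}\bigl(1+O(u)\bigr)$. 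Evaluating the constant term gives $w'u\big|_{u=0}=\tfrac{1}{1-\delta^2}$, whence $iN\log$ produces the claimed constant $-iN\log(1-\delta^2)$; the first-order coefficient produces the correction linear in $u=1/e^{-iz/N}$, and the remainder is of the next order in $u$, decaying like $e^{-2\Im(z)/N}$ as $\Im(z)\to\infty$.

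I expect the main difficulty to be analytic bookkeeping rather than algebra. One must check that, uniformly along $\Im(z)\to\infty$, the image $g(f_N(z))$ stays in the sheet on which $\sqrt{Q(u)}$ is the $+i$-branch (so that $\slit^\delta$ genuinely maps into $\BH$), and that the principal determination of the logarithm defining $f_N^{-1}$ is the one realised along the trajectory; only once both branches are fixed is the additive identity $\slitN_0(z)-z=iN\log(w'u)$ legitimate together with the single-valued power series above. Granting this, the expansion of the explicit algebraic expression is routine and yields the stated asymptotic expansion, with the error controlled by the next Laurent coefficient times $u^2$.
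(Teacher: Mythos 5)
Your route is essentially the paper's: the paper sets $\zeta=e^{-iz/N}=1/u$ and pushes the point through $f_N$, $g$, $\slit^\delta$, $g^{-1}$, $f_N^{-1}$, expanding at each stage in powers of $1/\zeta$; your only real departure is to compose the M\"obius factors exactly before expanding, which is cleaner and more honest about error propagation. The reduction $\slitN_0(z)-z=iN\log(w'u)$, the formula for $Q(u)$, the branch discussion, and the factorizations of numerator and denominator of $w'$ are all correct.

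The gap is in the sentence ``the first-order coefficient produces the correction linear in $u$'': you never evaluate that coefficient, and when one does evaluate it from your own (correct) factorizations it does not match the displayed statement. Rationalizing,
\begin{equation*}
w'=\frac{\bigl[(1+u)+\sqrt{1-2(1-2\delta^2)u+u^2}\bigr]^2}{4(1-\delta^2)\,u},
\qquad
w'u=\frac{1}{1-\delta^2}\bigl(1+2\delta^2u+O(u^2)\bigr),
\end{equation*}
so $iN\log(w'u)=-iN\log(1-\delta^2)+2iN\delta^2e^{iz/N}+O\bigl(e^{-2\ima(z)/N}\bigr)$: the coefficient of the exponentially small correction is $2iN\delta^2$, twice the $iN\delta^2$ claimed in the lemma. (The paper's own computation arrives at $\delta^2$ only because it discards $o(1/\zeta)$ terms in the numerator and denominator of the $g^{-1}$ step; since that denominator is itself of order $1/\zeta$, the discarded terms contribute at exactly the order of the claimed correction.) Likewise your observation that the remainder decays like $e^{-2\ima(z)/N}$ is the correct one, and it shows the stated error $o\bigl(e^{-|z|}\bigr)$ cannot hold for $N>2$. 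Neither discrepancy matters downstream---Lemma \ref{lem:pathpartition} uses only the constant term $-iN\log(1-\delta^2)$ and the fact that the remainder vanishes as $\ima(z)\to\infty$---but as written your argument asserts agreement with the stated expansion without checking the linear coefficient, and the check fails; you should either carry out the coefficient computation and flag the factor of $2$ (and the weaker error bound), or prove the weaker statement that is actually used.
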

Proof of the Lemma \ref{lem:rate} appears in Appendix \ref{AppA}. Using Lemma \ref{lem:rate}, one can calculate the average shift of the slit map, which in turn will provide us with growth rates of the process.
\begin{lemma}\label{lem:pathpartition}
For any $z\in \BT^N$, 
$$
\int_{-N\pi}^{N\pi}\left[\slitN_x\left(z\right)-z\right]dx=-i2N^2\pi\log(1-\delta^2)
.$$
\end{lemma}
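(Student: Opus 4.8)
The plan is to compute the integral $\int_{-N\pi}^{N\pi}\left[\slitN_x(z)-z\right]dx$ by exploiting the rotational structure of the slit map together with the precise asymptotic description furnished by Lemma \ref{lem:rate}. The guiding idea is that the integral of the displacement over all slit locations should, by the periodicity and the fact that each individual slit adds a fixed amount of logarithmic capacity, reduce to a clean closed form independent of the test point $z$. The appearance of $\log(1-\delta^2)$ on the right-hand side strongly suggests that the leading-order term in Lemma \ref{lem:rate}, namely $-iN\log(1-\delta^2)$, is precisely what survives integration, while the oscillatory correction $iN\frac{\delta^2}{e^{-iz/N}}$ and the exponentially small remainder integrate to zero.

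First I would use the identity \eqref{eq:periodic}, which expresses $\slitN_x(z)-z$ in terms of $\slitN(z-x)$, the reduction $(\Re(z-x)\bmod \pi N)$, and the fixed term $-i\Im(z)$. Changing variables from $x$ to $w=z-x$ converts the integral over $x\in[-N\pi,N\pi)$ into an integral of the base slit displacement over a period of length $2N\pi$, so that the problem becomes $\int \left[\slitN(w)-w\right]\,$-type quantities plus the contributions of the modular and imaginary pieces. The cleanest route, rather than integrating $z$-dependent expressions directly, is to observe that the left-hand side is constant in $z$ (the derivative in $z$ of the integrand integrates to a telescoping/boundary contribution that vanishes by periodicity on $\BT^N$), so I may evaluate it in the limit $\Im(z)\to\infty$. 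In that regime Lemma \ref{lem:rate} gives $\slitN_0(w)-w = -iN\log(1-\delta^2)+iN\frac{\delta^2}{e^{-iw/N}}+o(e^{-|w|})$ uniformly, and integrating the constant term $-iN\log(1-\delta^2)$ over the interval of length $2N\pi$ produces exactly $-i2N^2\pi\log(1-\delta^2)$.

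The remaining steps are to verify that the two correction terms contribute nothing after integration. The term $iN\frac{\delta^2}{e^{-iw/N}} = iN\delta^2 e^{iw/N}$ integrates over a full period $w\in[\,\cdot\,,\,\cdot+2N\pi)$ to zero, since $\int_0^{2N\pi} e^{iw/N}\,dw = 0$; this is where the exact choice of period length $2N\pi$ matters, and it is the step I expect to require the most care, because I must justify interchanging the limit $\Im(z)\to\infty$ with the integral and control the non-uniformity of the $o(e^{-|w|})$ remainder near the endpoints of the integration interval. The modular term $(\Re(w)\bmod \pi N)$ appearing in \eqref{eq:periodic} must also be handled: its integral over a symmetric period combines with the $-i\Im(z)$ term, and I would check that these real/imaginary bookkeeping pieces cancel consistently with the constancy-in-$z$ argument.

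The main obstacle will be making the constancy-in-$z$ reduction fully rigorous, i.e. showing that $\frac{d}{dz}\int_{-N\pi}^{N\pi}\left[\slitN_x(z)-z\right]dx = 0$ so that evaluation at $\Im(z)\to\infty$ is legitimate. This requires differentiating under the integral sign and using that $\frac{d}{dz}\slitN_x(z)-1$ is integrable in $x$ (guaranteed by \eqref{eq:lem_3.3_2} of Lemma \ref{lem:3.3}) together with the periodic boundary identification on $\BT^N$, which forces the boundary terms of the $x$-integration to cancel. Once constancy is established, the evaluation at the cusp via Lemma \ref{lem:rate} is the direct and essentially computational finish.
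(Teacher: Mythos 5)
Your proposal is correct and follows essentially the same route as the paper: reduce to integrating $\slitN(w)-w$ over a horizontal period, show the result is independent of the height, and evaluate in the limit $\Im(z)\to\infty$ via Lemma \ref{lem:rate}, where only the constant term $-iN\log(1-\delta^2)$ survives. The only cosmetic difference is that the paper establishes height-independence by shifting a rectangular contour (Cauchy's theorem plus cancellation of the vertical sides by periodicity of $\BT^N$), whereas you phrase it as vanishing of the $z$-derivative of the integral; these are the same fact.
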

\begin{proof}
Let $y=\ima(z)$, by \eqref{eq:periodic}
	\begin{equation}
	\begin{aligned}
	\int_{-N\pi}^{N\pi}\left[\slitN_x\left(z\right)-z\right]dx=\int_{-N\pi}^{N\pi}\left[\slitN\left(x+i y\right)-(x+i y)\right]dx
	\end{aligned}
	\end{equation}
	Fix $z \in \BT^N$ and consider the contour defined by the four curves $\gamma_i$, $i=1,2,3,4$ where
\begin{align*}
\gamma_1(t) &= z-t &\mbox{ for } &t \in [-N\pi,N\pi] \\
\gamma_2(t) &= z-N\pi + it &\mbox{ for } &t \in [0,R] \\
\gamma_3(t) &= z+t + iR &\mbox{ for } &t \in  [-N\pi,N\pi] \\ 
\gamma_4(t) &= z+N\pi +iR- it &\mbox{ for } &t \in [0,R],
\end{align*}
for some $N\ll R$ (see Figure \ref{gammas}).

\begin{figure}[h!]
	\begin{tikzpicture}[]
	\tikzstyle{bluecirc}=[circle,
draw=black,fill=magenta, thin,inner sep=0pt,minimum size=3mm]
	\node (c) at (-3,-0) [bluecirc] {};
	\node (cu) at (-3,2) [bluecirc] {};
	\node (d) at (3,0) [bluecirc] {};
	\node (du) at (3,2) [bluecirc] {};
	\draw [<-] (d)--(du);
	\draw [<-] (du)--(cu);
	\draw[<-]  (cu)--(c);
	\draw [<-] (c) -- (d);
	\node (zu) at (0,0) [bluecirc] {};
	\node (z) at (0,-0.3) {\footnotesize{$z$}};
	\node (a) at (-3,-0.3) {\footnotesize{$z-N\pi$}};
	\node (au) at (-4.3,2) {\footnotesize{$z-N\pi+iR$}};
	\node (b) at (3,-0.3) {\footnotesize{$z+N\pi$}};
	\node (bu) at (4.3,2) {\footnotesize{$z+N\pi+iR$}};
	\node (gam1) at (0,0.3) {\footnotesize{$\gamma_1$}};
	\node (gam2) at (-3.5,1) {\footnotesize{$\gamma_2$}};
	\node (gam4) at (3.2,1) {\footnotesize{$\gamma_4$}};
	\node (gam1) at (0,2.2) {\footnotesize{$\gamma_3$}};
	\end{tikzpicture}
	\caption{Illustration of the curves.}
	\label{gammas}
\end{figure}
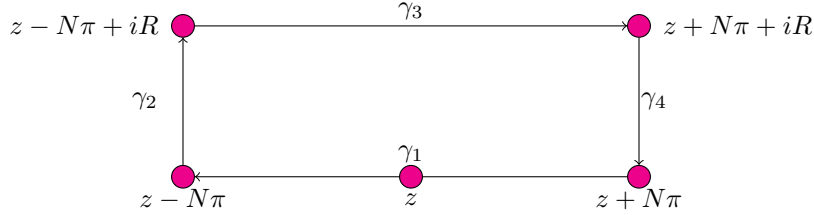

Since the function $g(w) = \slitN(w)-w$ is analytic within the contour, and by symmetry of the cylinder,
\begin{align*}
\int_{-N\pi}^{N\pi}\left[\slitN_x\left(z\right)-z\right]dx &= \int_{-N\pi}^{N\pi}\left[\slitN\left(x+i y\right)-(x+i y)\right]dx \\
&=-\oint_{\gamma_1} g(w) dw =  \left ( \oint_{\gamma_2} + \oint_{\gamma_3} + \oint_{\gamma_4}\right ) g(w) dw.
\end{align*}
On $\BT^N$ we have that $\oint_{\gamma_2} g(w)dw=-\oint_{\gamma_4}g(w)dw$, and thus
\begin{equation}\label{eq:gamma3int}
\int_{-N\pi}^{N\pi}\left[\slitN_x\left(z\right)-z\right]dx=\int_{-N\pi}^{N\pi}\left(\slitN(x+iy+iR)-(x+iy+iR)\right)dx
\end{equation}
Since the LHS of \eqref{eq:gamma3int} does not depend on $R$, we can take the limit $R\to\infty$ and by Lemma \ref{lem:rate} we obtain that 
\begin{equation}\label{eq:Rlimit}
\int_{-N\pi}^{N\pi}\left[\slitN_x\left(z\right)-z\right]dx=-i2N^2\pi\log(1-\delta^2)
\end{equation}
	
\end{proof}	


%
\section{Doob Decomposition and Growth Rate}
In this section we show that $\tilde{\mathfs{A}}_t^{N,\delta}(z)$ is a Martingale in $t$.
	

	\begin{lemma}\label{lem:doob}
	There is a zero-mean martingale $\mathfs{M}^{N,\delta}_t$ such that
		\bae
		\tilde{\mathfs{A}}_t^{N,\delta}(z)=z +\mathfs{M}^{N,\delta}_t(z)-i 2\pi t N^2\log(1-\delta^2)
		\eae
	\end{lemma}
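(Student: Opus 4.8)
The plan is to construct the Doob--Meyer decomposition for the backward process $\tilde{\mathfs{A}}_t^{N,\delta}(z)$ directly from its defining jump structure \eqref{eq:backcylprocess}. The process is a pure-jump process driven by the intensity-$1$ Poisson point process on $[-N\pi,N\pi)\times(0,\infty)$, so its compensator is obtained by integrating the jump increment against the intensity measure $dx\,ds$. First I would write, for the backward process, the jump at a Poisson point $(s,x)$ as
\[
\Delta \tilde{\mathfs{A}}_s^{N,\delta}(z) = \slitN_x\!\left(\tilde{\mathfs{A}}_{s^-}^{N,\delta}(z)\right)-\tilde{\mathfs{A}}_{s^-}^{N,\delta}(z),
\]
and recognize that the predictable compensator of the sum of these jumps up to time $t$ is
\[
\int_0^t\!\!\int_{-N\pi}^{N\pi}\Bigl[\slitN_x\!\left(\tilde{\mathfs{A}}_{s^-}^{N,\delta}(z)\right)-\tilde{\mathfs{A}}_{s^-}^{N,\delta}(z)\Bigr]\,dx\,ds.
\]

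The key observation is that Lemma \ref{lem:pathpartition} computes this inner spatial integral exactly, and crucially the answer $-i2N^2\pi\log(1-\delta^2)$ does \emph{not} depend on the point being mapped. Hence, applying Lemma \ref{lem:pathpartition} with the argument $w=\tilde{\mathfs{A}}_{s^-}^{N,\delta}(z)\in\BT^N$, the inner integral equals the constant $-i2N^2\pi\log(1-\delta^2)$ for \emph{every} $s$ and $z$, so the compensator collapses to
\[
\int_0^t \bigl(-i2N^2\pi\log(1-\delta^2)\bigr)\,ds = -i2\pi t N^2\log(1-\delta^2).
\]
I would then define
\[
\mathfs{M}^{N,\delta}_t(z):=\tilde{\mathfs{A}}_t^{N,\delta}(z)-z+i2\pi t N^2\log(1-\delta^2),
\]
which by construction is the compensated sum of jumps, hence a martingale with respect to the filtration generated by the Poisson process. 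Since $\tilde{\mathfs{A}}_0^{N,\delta}(z)=z$ and the compensator vanishes at $t=0$, we have $\mathfs{M}^{N,\delta}_0(z)=0$, giving the zero-mean property.

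To make this rigorous I would verify integrability so that the compensation step is justified: the martingale property requires that the jumps have finite expectation, which follows from the second-moment bound \eqref{eq:lem_3.3_1} of Lemma \ref{lem:3.3} via Cauchy--Schwarz over the finite interval $[-N\pi,N\pi]$, ensuring $\int_{-N\pi}^{N\pi}|\slitN_x(w)-w|\,dx<\infty$ uniformly in $w\in\BT^N$. I expect the main subtlety to be the interchange of the limit defining $\tilde{\mathfs{A}}$ with the expectation/compensation, i.e.\ confirming that the inner integral may legitimately be evaluated via Lemma \ref{lem:pathpartition} at the random point $\tilde{\mathfs{A}}_{s^-}^{N,\delta}(z)$ rather than at a deterministic one; this is handled by conditioning on $\tilde\CF_{s^-}$, using that on the finite cylinder only finitely many jumps occur in $[0,t]$ almost surely so the sum is genuinely finite and no limit-exchange is needed, and that the spatial integral identity holds pointwise for each realization of $\tilde{\mathfs{A}}_{s^-}^{N,\delta}(z)$. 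The fact that Lemma \ref{lem:pathpartition} yields a value independent of the argument is exactly what makes the compensator deterministic and linear in $t$, which is the crux of the decomposition.
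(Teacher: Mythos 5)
Your proposal is correct and follows essentially the same route as the paper: identify the predictable compensator of the pure-jump process as $\int_0^t\int_{-N\pi}^{N\pi}[\slitN_x(\tilde{\mathfs{A}}_{s^-}^{N,\delta}(z))-\tilde{\mathfs{A}}_{s^-}^{N,\delta}(z)]\,dx\,ds$ and then collapse it to the deterministic drift $-i2\pi t N^2\log(1-\delta^2)$ via Lemma \ref{lem:pathpartition}, whose argument-independence is indeed the crux. The paper simply invokes the Doob decomposition and states the compensator; your additional remarks on integrability and the almost-sure finiteness of the jump count on the finite cylinder are sound and only make the argument more explicit.
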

	\begin{proof}
Recall Definition \ref{def:process}. By Doob decomposition theorem one can write $\tilde{\mathfs{A}}_t^{N,\delta}(z)=z+\mathfs{M}_t^{N,\delta}(z)+\mathfs{D}_t^{N,\delta}(z)$, where $\mathfs{M}_t^{N,\delta}$ is a mean-zero martingale, and $\mathfs{D}_t^{N,\delta}$ is predictable and given by
			\begin{equation*}
				\begin{aligned}
					\mathfs{D}_t^{N,\delta}(z)
					&=\int_{0}^{t}\int_{-N\pi}^{N\pi}\left[\slitN_x\left(\tilde{\mathfs{A}}_{s^-}^{N,\delta}(z)\right)-\tilde{\mathfs{A}}_{s^-}^{N,\delta}(z)\right]dxds.
				\end{aligned}
				\end{equation*}
			We conclude the result by replacing the inner integral using Lemma \ref{lem:pathpartition}, that is
			\begin{equation*}
				\begin{aligned}
					\mathfs{D}_t^{N,\delta}(z)=-i2\pi N^2 t \log\left(1-\delta^2\right)
				\end{aligned}
				.\end{equation*}
	\end{proof}


By Lemma \ref{lem:doob} and \eqref{eq:deltan}, we immediately obtain the average growth rate of the CHL$^N$ process as $N\to\infty$, which corresponds to the growth of the SHL$(0)$ in \cite[Lemma 5.1]{berger2020growth} .
	
\begin{corollary}\label{cor:errlimit}
\bae	
\lim_{N\to\infty}\ev[\tilde{\mathfs{A}}_t^{N,\delta(N,\lambda)}(z)]&=z+\lim_{N\to\infty}\mathfs{D}_t^{N,\delta}(z)=z+i \frac{\pi\lambda^2 t}{2}\\
&=z+\lim_{N\to\infty}-i2\pi N^2 t \log\left(1-\delta(N,\lambda)^2\right)\\
&=z+i \frac{\pi\lambda^2 t}{2}
.\eae
\end{corollary}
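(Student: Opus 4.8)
The plan is to take expectations in the Doob decomposition of Lemma \ref{lem:doob} and then evaluate the resulting deterministic limit using the explicit formula \eqref{eq:deltan} for the scaling parameter $\delta(N,\lambda)$.

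First I would apply $\ev$ to both sides of the identity $\tilde{\mathfs{A}}_t^{N,\delta}(z)=z+\mathfs{M}_t^{N,\delta}(z)-i2\pi t N^2\log(1-\delta^2)$ provided by Lemma \ref{lem:doob}. Since $\mathfs{M}_t^{N,\delta}$ is a zero-mean martingale, $\ev[\mathfs{M}_t^{N,\delta}(z)]=0$, so that $\ev[\tilde{\mathfs{A}}_t^{N,\delta}(z)]=z+\mathfs{D}_t^{N,\delta}(z)=z-i2\pi N^2 t\log(1-\delta^2)$, where the drift $\mathfs{D}_t^{N,\delta}(z)=-i2\pi N^2 t\log(1-\delta^2)$ is exactly the quantity computed in the proof of Lemma \ref{lem:doob}. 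This accounts for the first two equalities in the statement; note that the right-hand side is independent of $z$, as it must be for a pure drift term.

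It then remains to evaluate $\lim_{N\to\infty}\bigl(-i2\pi N^2 t\log(1-\delta(N,\lambda)^2)\bigr)$. Here I would substitute the exact value \eqref{eq:deltan}, observing that $\delta(N,\lambda)=1-\tfrac{2}{1+e^{\lambda/N}}=\tanh\bigl(\tfrac{\lambda}{2N}\bigr)$, and Taylor expand as $N\to\infty$: one has $\delta(N,\lambda)=\tfrac{\lambda}{2N}+O(N^{-3})$, hence $\delta(N,\lambda)^2=\tfrac{\lambda^2}{4N^2}+O(N^{-4})$ and $\log(1-\delta^2)=-\delta^2+O(\delta^4)=-\tfrac{\lambda^2}{4N^2}+O(N^{-4})$. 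Multiplying by the factor $-i2\pi N^2 t$ yields $i\tfrac{\pi\lambda^2 t}{2}+O(N^{-2})$, and passing to the limit gives the claimed value $i\tfrac{\pi\lambda^2 t}{2}$.

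Since all of the randomness is absorbed into the zero-mean martingale and everything that survives is an elementary deterministic computation, there is no genuine analytic obstacle. The only point demanding (mild) care is the leading-order cancellation between the factor $N^2$ and the $N^{-2}$ decay of $\log(1-\delta(N,\lambda)^2)$: it is precisely this balance that singles out the scaling $\delta\approx\lambda/(2N)$ and produces a finite, $\lambda$-dependent limiting drift rather than $0$ or $\infty$, matching the growth rate of SHL$(0)$ recorded in \cite[Lemma 5.1]{berger2020growth}.
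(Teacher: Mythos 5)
Your proposal is correct and follows exactly the route the paper intends: the corollary is stated as an immediate consequence of Lemma \ref{lem:doob} (taking expectations kills the zero-mean martingale, leaving the deterministic drift $-i2\pi N^2 t\log(1-\delta^2)$) together with the expansion $\delta(N,\lambda)=\tanh(\lambda/2N)\approx\lambda/(2N)$ from \eqref{eq:deltan}. Your Taylor computation of the limit $i\pi\lambda^2 t/2$ supplies precisely the details the paper leaves implicit.
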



\begin{remark}\label{rem:classicalHL}
Note that an alternative approach to seeing the growth rate obtained in Corollary \ref{cor:errlimit}, is based on the Laurent expansion given in \cite{norris2012hastings}. By Remark \ref{rem:classicalHL_connection}, we have
	$$
	\tilde{\mathfs{A}}_n^{N,\delta}(z)=f_N^{-1}\circ \Psi_n^{{\frac{2\delta}{1-\delta}}}\circ f_N(z)
	.$$
Based on \cite{norris2012hastings} we can write $\Psi_n^{{\frac{2\delta}{1-\delta}}}(z)=e^{cn}z+O(1)$, as $z\to\infty$.

%
%
%
	We get
	$$
	\tilde{\mathfs{A}}_n^{N,\delta}(z)=iN\log\left(e^{cn-iz/N}+O(1)\right)=z+icnN+o(1)\text{, as }z\to\infty
	.$$
	By \cite[Footnote$^5$]{norris2012hastings} 
	$$c\left({\frac{2\delta}{1-\delta}}\right)=-\log\left(1-\frac{{\frac{2\delta}{1-\delta}}^2}{(2+{\frac{2\delta}{1-\delta}})^2}\right)\approx {\delta^2}, \text{ as }\delta\to 0.$$


By \eqref{eq:deltan}, $\delta^N\approx \frac{\lambda}{2N}$ for a large enough $N$. We obtain by taking the time scaling $n=\lceil 2\pi Nt\rceil$, that
	$$
	\lim_{N\to\infty}\tilde{\mathfs{A}}_t^{N,\delta(N,\lambda)}(z)=z+i \frac{\pi\lambda^2 t}{2}+o(1),\text{ as }z\to\infty
	.$$
\end{remark}

The final Lemma in this section is a standard compactness and conformal distortion argument, that simplifies the topology in which we test the convergence in the main Theorem. Remember we denoted $\mathcal X$ to be the space of c\'adl\'ag functions from $[0,\infty)$ to $\mathcal Z$, with the Borel $\sigma$-algebra induced by the Skorohod topology.
\begin{lemma}\label{lem:compactness}
Let $\{\mathfs{Y}_s\}_{s\ge 0}\in\mathcal{X}$, then for any compact $K\subset\BH$, there exists $N\in\BN$, $z_1,\ldots,z_N\in K$ such that
$$
\ev\left[\sup_{s\le t}\sup_{z\in K}|\mathfs{Y}_s(z)|^2\right]\le 2\sum_{i=1}^N \ev\left[\sup_{s\le t}|\mathfs{Y}_s(z_i)|^2\right]
+32\sum_{i=1}^N \ev\left[\sup_{s\le t}\left|\frac{\partial}{\partial z}\mathfs{Y}_s(z_i)\right|^2\right]
.$$
\end{lemma}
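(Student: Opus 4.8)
The plan is to combine a finite covering of $K$ with a conformal distortion (Koebe) estimate, crucially exploiting that each $\mathfs{Y}_s\in\mathcal Z$ is a \emph{univalent} map on $\BH$. First I would fix the geometry: since $K\subset\BH$ is compact, $\rho:=\mathrm{dist}(K,\partial\BH)>0$, so I set $r:=\min\{1,\rho/2\}$ and, by compactness, choose finitely many points $z_1,\dots,z_N\in K$ with $K\subset\bigcup_{i=1}^N B(z_i,r)$. By construction $B(z_i,2r)\subset\BH$ for every $i$, so each $\mathfs{Y}_s$ is univalent on $B(z_i,2r)$.

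The heart of the argument is the distortion bound, which replaces the oscillation of $\mathfs{Y}_s$ over a ball by its derivative at the center. Fix $s\le t$ and an index $i$. Since $\mathfs{Y}_s$ is univalent, $\mathfs{Y}_s'(z_i)\neq 0$, and the rescaled function
$$
h(\zeta)=\frac{\mathfs{Y}_s(z_i+2r\zeta)-\mathfs{Y}_s(z_i)}{2r\,\mathfs{Y}_s'(z_i)}
$$
is univalent on the unit disk with $h(0)=0$ and $h'(0)=1$. The Koebe growth theorem gives $|h(\zeta)|\le |\zeta|/(1-|\zeta|)^2\le 2$ for $|\zeta|\le 1/2$, so for every $z\in B(z_i,r)$ (i.e.\ $|\zeta|=|z-z_i|/(2r)\le 1/2$),
$$
\left|\mathfs{Y}_s(z)-\mathfs{Y}_s(z_i)\right|\le 4r\,\left|\mathfs{Y}_s'(z_i)\right|.
$$
Applying $(a+b)^2\le 2a^2+2b^2$ and $r\le 1$ then yields, for all $z\in B(z_i,r)$,
$$
\left|\mathfs{Y}_s(z)\right|^2\le 2\left|\mathfs{Y}_s(z_i)\right|^2+32\,r^2\left|\mathfs{Y}_s'(z_i)\right|^2\le 2\left|\mathfs{Y}_s(z_i)\right|^2+32\left|\mathfs{Y}_s'(z_i)\right|^2,
$$
which is where the constants $2$ and $32=2\cdot 4^2$ originate.

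To finish, for any $z\in K$ I pick $i$ with $z\in B(z_i,r)$ and bound the single $i$-th term by the full (nonnegative) sum over all indices, giving $\sup_{z\in K}|\mathfs{Y}_s(z)|^2\le 2\sum_i|\mathfs{Y}_s(z_i)|^2+32\sum_i|\mathfs{Y}_s'(z_i)|^2$ for every $s$. Since the right-hand side no longer depends on $z$, taking $\sup_{s\le t}$ (using that the supremum of a finite sum is at most the sum of the suprema) and then $\ev[\,\cdot\,]$ by monotonicity produces exactly the stated inequality. The only genuinely substantive point—the main obstacle—is the distortion step: for a \emph{generic} holomorphic function the oscillation on $B(z_i,r)$ cannot be controlled by the derivative at the center alone (think of $z\mapsto z^2$ at the origin), so conformality is essential here; univalence is precisely what guarantees $\mathfs{Y}_s'(z_i)\neq 0$ and, via Koebe, supplies the universal constant. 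Everything else (the finite cover, interchanging $\sup$ and $\sum$, and passing to the expectation) is routine.
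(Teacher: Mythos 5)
Your proof is correct and follows essentially the same route as the paper: a finite cover of $K$ by balls staying well inside $\BH$, a rescaled univalent function at each center, a classical distortion estimate, and $(a+b)^2\le 2a^2+2b^2$. The only (cosmetic) difference is that you apply the Koebe growth theorem directly to the increment $\mathfs{Y}_s(z)-\mathfs{Y}_s(z_i)$, whereas the paper bounds $|\mathfs{Y}_s'|$ on the ball via the derivative-distortion inequality and integrates; your variant in fact yields the stated constant $32=2\cdot 4^2$ exactly.
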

\begin{proof}
For any $z_0\in\BH$, define $g_t:\{|z|<1\}\to\BC$, by 
$$
g_t(z)=\frac{\mathfs{Y}_t(z_0+rz)}{r\mathfs{Y}'_t(z_0)}
,$$
where $r\le \ima z_0$. Then $g_t$ is a conformal map with $g'(0)=1$. By conformal distortion \cite[Theorem 4.5]{garnett2005harmonic} 
$$
|g_t'(z)|\le\frac{1+|z|}{(1-|z|)^3}
.$$
Thus for all $z\in B(z_0,r/2)$, one can write $z=z_0+r\xi$, with $|\xi|\le1/2$. We get that
$$
|\mathfs{Y}_t'(z)|\le 16 |\mathfs{Y}_t'(z_0)|
.$$
For any compact set $K\subset\BH$, one can find an $N\in\BN$, $z_1,\ldots,z_N\in K$ and $0<r_i<\ima z_1/2\wedge 1$, such that $$
K\subset \bigcup_{i=1}^N B(z_i,r_i).
$$
For any $z\in K$ there is some $i\in\{1,\ldots, N\}$ such that $z\in B(z_i,r_i)$, and thus
$$
|\mathfs{Y}_t(z)|\le |\mathfs{Y}_t(z_i)|+|z-z_i|\sup_{\xi\in B(z_i,r_i)}\left|\mathfs{Y}'_t(\xi)\right |\le |\mathfs{Y}_t(z_i)|+16\left|\mathfs{Y}_t'(z_i)\right|
.$$
Finally we get by using $(|a|+|b|)^2\le 2|a|^2+2|b|^2$
$$
\ev\left[\sup_{s\le t}\sup_{z\in K}|\mathfs{Y}_s(z)|^2\right]\le 2\sum_{i=1}^N \ev\left[\sup_{s\le t}|\mathfs{Y}_s(z_i)|^2\right]
+32\sum_{i=1}^N \ev\left[\sup_{s\le t}\left|\frac{\partial}{\partial z}\mathfs{Y}_s(z_i)\right|^2\right]
.$$
\end{proof}	


\section{Proof of Convergence of CHL$^N$ Process to SHL$(0)$}

	For any $N\in\BN$ denote by $T_N=\inf\{t\ge 0: |\tilde{\mathfs{A}}_t^{N,\delta}(z)|>\frac{N^{1/2}}{4}\}$,  and let $T=T_N\wedge T_M$.
	\begin{corollary}\label{cor:stoptime}
	$\prob\left[ T<t \right]\le\frac{C(t,z)}{N\wedge M}.$
	\end{corollary}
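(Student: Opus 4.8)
The plan is to control $\tilde{\mathfs{A}}_t^{N,\delta}(z)$ through its Doob decomposition and to show that its fluctuations are of order one, so that reaching height $N^{1/2}/4$ before time $t$ is a rare event. First I would invoke Lemma \ref{lem:doob} to write
$$
\tilde{\mathfs{A}}_s^{N,\delta}(z)=z+\mathfs{M}_s^{N,\delta}(z)-i2\pi s N^2\log(1-\delta^2),
$$
and observe that the deterministic drift is bounded uniformly in $N$ and in $s\le t$: by \eqref{eq:deltan} one has $\delta\approx \lambda/(2N)$, so $-2\pi N^2 s\log(1-\delta^2)$ converges as in Corollary \ref{cor:errlimit}, and in particular there is $C_1(t,\lambda)$ with $\bigl|{-i2\pi s N^2\log(1-\delta^2)}\bigr|\le C_1(t,\lambda)$ for all $s\le t$ and all large $N$. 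Hence, for $N$ large enough (in terms of $z,t,\lambda$), the event $\{T_N<t\}$ forces the martingale part alone to be large:
$$
\{T_N<t\}\subseteq\Bigl\{\sup_{s\le t}\bigl|\mathfs{M}_s^{N,\delta}(z)\bigr|>\tfrac18 N^{1/2}\Bigr\},
$$
since at any time $s\le t$ one has $\bigl|\tilde{\mathfs{A}}_s^{N,\delta}(z)\bigr|\le |z|+\bigl|\mathfs{M}_s^{N,\delta}(z)\bigr|+C_1(t,\lambda)$ and $N^{1/2}/4-|z|-C_1(t,\lambda)\ge N^{1/2}/8$ eventually.

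Next I would estimate the second moment of the martingale at time $t$. The process $\mathfs{M}_t^{N,\delta}(z)$ is the compensation of the pure-jump process driven by the intensity-$1$ Poisson field, i.e.\ the compensated integral of the predictable jump field $(s,x)\mapsto \slitN_x(\tilde{\mathfs{A}}_{s^-}^{N,\delta}(z))-\tilde{\mathfs{A}}_{s^-}^{N,\delta}(z)$ over $[-N\pi,N\pi]\times[0,t]$. By the It\^o isometry for compensated Poisson integrals,
$$
\ev\bigl[|\mathfs{M}_t^{N,\delta}(z)|^2\bigr]=\ev\!\left[\int_0^t\!\!\int_{-N\pi}^{N\pi}\bigl|\slitN_x(\tilde{\mathfs{A}}_{s^-}^{N,\delta}(z))-\tilde{\mathfs{A}}_{s^-}^{N,\delta}(z)\bigr|^2\,dx\,ds\right].
$$
Since every slit map preserves $\BT^N$, the left limit $\tilde{\mathfs{A}}_{s^-}^{N,\delta}(z)$ lies in $\BT^N$, so I can apply the uniform bound \eqref{eq:lem_3.3_1} of Lemma \ref{lem:3.3} to the inner integral pointwise in $(s,\omega)$; crucially this constant $C$ is independent of $N$ and of the (possibly large) current position of the process. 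This gives $\ev[|\mathfs{M}_t^{N,\delta}(z)|^2]\le Ct$.

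To conclude I would apply Doob's maximal inequality. As $|\mathfs{M}_s^{N,\delta}(z)|^2$ is a nonnegative submartingale (each of $\Re\mathfs{M},\Im\mathfs{M}$ is a real martingale, hence its square is a submartingale), Doob's $L^2$ inequality gives
$$
\prob\Bigl[\sup_{s\le t}\bigl|\mathfs{M}_s^{N,\delta}(z)\bigr|>\tfrac18 N^{1/2}\Bigr]\le\frac{64\,\ev[|\mathfs{M}_t^{N,\delta}(z)|^2]}{N}\le\frac{64\,Ct}{N},
$$
so $\prob[T_N<t]\le C(t,z)/N$. The identical argument with $M$ in place of $N$ yields $\prob[T_M<t]\le C(t,z)/M$. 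Since $T=T_N\wedge T_M$ we have $\{T<t\}=\{T_N<t\}\cup\{T_M<t\}$, and a union bound gives $\prob[T<t]\le C(t,z)(1/N+1/M)\le 2C(t,z)/(N\wedge M)$, which is the claim after relabeling the constant.

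The main obstacle is the second-moment estimate: the whole argument hinges on $\ev[|\mathfs{M}_t^{N,\delta}(z)|^2]$ staying $O(t)$ rather than growing with $N$, and this relies on Lemma \ref{lem:3.3} being uniform both in $N$ and in the random position $\tilde{\mathfs{A}}_{s^-}^{N,\delta}(z)$, so that no stopping of the integrand is needed. A secondary but essential point is the cancellation making the drift bounded: naively $N^2\log(1-\delta^2)$ could diverge, and it is only the scaling $\delta=\delta(N,\lambda)$ of \eqref{eq:deltan} (equivalently Corollary \ref{cor:errlimit}) that keeps it of order one, which is precisely what makes $N^{1/2}/4$ the natural escape scale.
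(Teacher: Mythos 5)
Your argument is correct and follows essentially the same route as the paper: Doob decomposition via Lemma \ref{lem:doob}, boundedness of the drift under the scaling $\delta=\delta(N,\lambda)$, the It\^o isometry together with the uniform bound of Lemma \ref{lem:3.3} to control the martingale's second moment, Doob's $L^2$ maximal inequality, and a union bound over $T_N$ and $T_M$. The only cosmetic difference is that the paper applies Markov's inequality to $\sup_{s\le t}|\tilde{\mathfs{A}}_s^{N,\delta}(z)|^2$ first and then splits off the martingale, whereas you absorb the drift into the threshold before invoking the maximal inequality.
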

	\begin{proof}
	By  Markov's inequality and then $(|z|+|w|)^2\le 2|z|^2+2|w|^2$,
$$
\prob\left[ T_N<t \right]\le\frac{16\ev\left[\sup_{s\le t}|\tilde{\mathfs{A}}_s^{N,\delta}(z)|^2\right]}{N}\le\frac{32\ev\left[\sup_{s\le t}|\tilde{\mathfs{A}}_s^{N,\delta}(z)-z|^2\right]+32|z|^2}{N}
.$$
	By Lemma \ref{lem:doob}, there is a constant $C(t)$ such that
	\bae
	\ev\left[\sup_{s\le t}\Big|\tilde{\mathfs{A}}_s^{N,\delta}(z)-z\Big|^2\right]&\le 2\ev\left[\sup_{s\le t}\Big|\mathfs{M}_s^{N,\delta}(z)\Big|^2\right]+C(t)\\
	&\le 2\ev\left[\int_0^t\int_{-N\pi}^{N\pi}\Big|\slitN_x(\tilde{\mathfs{A}}_s^{N,\delta}(z))-\tilde{\mathfs{A}}_s^{N,\delta}(z)\Big|^2dxds\right]+C(t)
	,\eae
	where the second inequality is derived from Doob's martingale inequality.
	By Lemma \ref{lem:3.3} the right summand is bounded by a constant. Finally we finish by noting that 
	$$
	\prob\left[ T<t \right]\le\prob\left[ T_N<t \right]+\prob\left[ T_M<t \right]
	.$$
	\end{proof}
	
The following Theorem states that the sequence $\{\tilde{\mathfs{A}}_s^{N,\delta}(z)\}_N$ is Cauchy in the topology of mean square convergence on compacts. 
\begin{theorem}\label{lem:cauchyA}
For any compact $[0,t]$ and $K\subset \BH$ and any $\ep>0$, there exists an $\tilde{N}\in\BN$ such that for any $M>N>\tilde{N}$,
$$
\ev\left[\sup_{s\leq t}\sup_{z\in K}\left|\tilde{\mathfs{A}}_s^{N,\delta}(z)-\tilde{\mathfs{A}}_s^{M,\delta}(z)\right|^2\right]<\ep
.$$

\end{theorem}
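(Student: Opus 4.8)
The plan is to reduce to the pointwise setting via Lemma \ref{lem:compactness}, decompose the difference through the Doob decomposition of Lemma \ref{lem:doob}, and then run a Gronwall argument on the predictable quadratic variation of the martingale part. By Lemma \ref{lem:compactness} applied to $\mathfs{Y}_s := \tilde{\mathfs{A}}_s^{N,\delta} - \tilde{\mathfs{A}}_s^{M,\delta}$ it suffices to bound $\ev[\sup_{s\le t}|\mathfs{Y}_s(z_i)|^2]$ together with the analogous quantity for $\partial_z\mathfs{Y}_s$ at finitely many points $z_1,\dots,z_n\in K$. I carry out the argument for a fixed base point $z$; the derivative process is handled by the same scheme applied to the differentiated recursion and its own slit estimates. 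Using Lemma \ref{lem:doob} for both radii and subtracting (the common initial value $z$ cancels),
\[
\tilde{\mathfs{A}}_s^{N,\delta}(z) - \tilde{\mathfs{A}}_s^{M,\delta}(z) = \big(\mathfs{M}_s^{N,\delta}(z) - \mathfs{M}_s^{M,\delta}(z)\big) - i2\pi s\big[N^2\log(1-(\delta^N)^2) - M^2\log(1-(\delta^M)^2)\big].
\]
By Corollary \ref{cor:errlimit} both $-i2\pi N^2\log(1-(\delta^N)^2)$ and $-i2\pi M^2\log(1-(\delta^M)^2)$ converge to $i\pi\lambda^2/2$, so the deterministic drift difference is $o(1)$ uniformly for $s\in[0,t]$ and contributes a vanishing amount after squaring.

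It remains to control $\ev[\sup_{s\le t}|\mathfs{M}_s^{N,\delta}(z) - \mathfs{M}_s^{M,\delta}(z)|^2]$. The difference of two mean-zero martingales adapted to the common Poisson process is again such a martingale, so Doob's $L^2$ inequality bounds this by $4\ev[\langle\mathfs{M}^{N,\delta} - \mathfs{M}^{M,\delta}\rangle_t]$. This difference martingale is the compensated sum of the jumps of $\tilde{\mathfs{A}}^{N,\delta} - \tilde{\mathfs{A}}^{M,\delta}$: at an arrival $(u,x)$ with $|x|\le\pi N$ both clusters grow and the jump is $[\slitN_x(A^N) - A^N] - [\slitM_x(A^M) - A^M]$, while for $\pi N<|x|\le\pi M$ only the $M$-cluster grows, where I abbreviate $A^N = \tilde{\mathfs{A}}_{u^-}^{N,\delta}(z)$ and $A^M = \tilde{\mathfs{A}}_{u^-}^{M,\delta}(z)$. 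With the Poisson intensity written as $dx\,du$, the predictable bracket equals
\[
\int_0^t\!\!\int_{-\pi N}^{\pi N}\big|(\slitN_x(A^N) - A^N) - (\slitM_x(A^M) - A^M)\big|^2 dx\,du + \int_0^t\!\!\int_{\pi N<|x|\le\pi M}\big|\slitM_x(A^M) - A^M\big|^2 dx\,du.
\]

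In the overlap integral I split the integrand as $[\slitN_x(A^N) - \slitN_x(A^M) - (A^N - A^M)] + [\slitN_x(A^M) - \slitM_x(A^M)]$. The first bracket equals $(A^N-A^M)\int_0^1\!\big(\tfrac{d}{d\zeta}\slitN_x(A^M + \theta(A^N - A^M)) - 1\big)d\theta$, so after squaring and integrating in $x$ it is bounded, via \eqref{eq:lem_3.3_2} of Lemma \ref{lem:3.3}, by $C|\mathfs{Y}_{u^-}(z)|^2$, producing the Gronwall term $C\int_0^t\ev[\sup_{r\le u}|\mathfs{Y}_r(z)|^2]\,du$. The second bracket is the slit-map discrepancy, which by Lemma \ref{lem:conv_slit_fun} (applied to $\slitN$, to $\slitM$, and to their common half-plane limit) is $O(1/N)$ and integrates to a term tending to $0$; the tail integral tends to $0$ by Lemma \ref{lem:uniformtail}. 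Inserting the stopping time $T = T_N\wedge T_M$ of Corollary \ref{cor:stoptime} keeps $A^N,A^M$ in a region where these slit estimates hold uniformly, while the complementary event $\{T<t\}$ has probability $\le C/(N\wedge M)$ and is absorbed by a crude moment bound. Collecting everything yields
\[
\ev\Big[\sup_{s\le t\wedge T}|\mathfs{Y}_s(z)|^2\Big] \le C\int_0^t \ev\Big[\sup_{r\le u\wedge T}|\mathfs{Y}_r(z)|^2\Big]du + \rho(N),
\]
with $\rho(N)\to0$ as $N\to\infty$ uniformly in $M>N$, and Gronwall's inequality gives $\ev[\sup_{s\le t\wedge T}|\mathfs{Y}_s|^2]\le\rho(N)e^{Ct}$; removing the localization drives the whole quantity below $\ep$.

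The main difficulty is not the algebraic decomposition but securing uniformity of the deterministic slit estimates along the random trajectory: the base points $A^N,A^M$ drift upward and wander in the real direction, so the constants $C(z)$ of Lemma \ref{lem:conv_slit_fun} and the tail threshold of Lemma \ref{lem:uniformtail} must be controlled uniformly over the range explored before time $t\wedge T$. This is precisely what the localization by $T$ and the $N$-uniform formulations of Lemmas \ref{lem:3.3} and \ref{lem:uniformtail} are designed to supply, and checking that the resulting $\rho(N)$ genuinely vanishes — together with running the entire scheme a second time for the derivative process demanded by Lemma \ref{lem:compactness} — is the technical heart of the argument.
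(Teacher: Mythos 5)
Your proposal is correct and follows essentially the same route as the paper: the same Doob decomposition with the drift difference controlled via Corollary \ref{cor:errlimit}, the reduction to pointwise (and derivative) estimates via Lemma \ref{lem:compactness}, the same split of the quadratic variation into the tail integral over $\pi N<|x|\le\pi M$ (handled by Lemma \ref{lem:uniformtail} and the stopping time of Corollary \ref{cor:stoptime}) and the overlap integral decomposed as $[(\slitN_x-\mathrm{id})(A^N)-(\slitN_x-\mathrm{id})(A^M)]+[(\slitN_x-\slitM_x)(A^M)]$, followed by Gr\"onwall. The only cosmetic difference is that you run Gr\"onwall on the stopped process and remove the localization at the end, whereas the paper keeps the Gr\"onwall inequality unstopped and uses $T$ only inside the tail term where the $x$-integral is deterministically bounded by Lemma \ref{lem:3.3}.
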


\begin{proof}
By Lemma \ref{lem:doob} one can decompose
		\begin{equation*}
			\tilde{\mathfs{A}}_s^{N,\delta}-\tilde{\mathfs{A}}_s^{M,\delta}=\mathfs{M}_s^{N,\delta}-\mathfs{M}_s^{M,\delta}+\mathfs{D}_s^{N,\delta}-\mathfs{D}_s^{M,\delta}
		\end{equation*}
		and so by triangle inequality we get
		\begin{equation}\label{eq:triangleDoob}
			\begin{aligned}
				\ev\left[\sup_{s\leq t}\left|\tilde{\mathfs{A}}_s^{N,\delta}(z)-\tilde{\mathfs{A}}_s^{M,\delta}(z)\right|^2\right]&\leq
				\ev\left[\sup_{s\leq t}\left|\mathfs{M}_s^{N,\delta}(z)-\mathfs{M}_s^{M,\delta}(z)\right|^2\right] \\
				&+\ev\left[\sup_{s\leq t}\left|\mathfs{D}_s^{N,\delta}(z)-\mathfs{D}_s^{M,\delta}(z)\right|^2\right].
			\end{aligned}
		\end{equation}	
		Note that by Corollary \ref{cor:errlimit}, the drift part $\mathfs{D}_s^{N,\delta}-\mathfs{D}_s^{M,\delta}$ is deterministic and is of order smaller than $\frac{c}{\tilde{N}^2}$.

By Lemma \ref{lem:doob} it is enough to prove for any $M>N>\tilde{N}$,
\begin{equation}
\ev\left[\sup_{s\leq t}\sup_{z\in K}\left|{\mathfs{M}}_s^{N,\delta}(z)-{\mathfs{M}}_s^{M,\delta}(z)\right|^2\right]<\ep,
\end{equation}
where $\mathfs{M}_s^{N,\delta}(z)$ is the martingale part of $\tilde{\mathfs{A}}_s^{N,\delta}(z)$ one obtains from the Doob decomposition theorem.
By Lemma \ref{lem:compactness}, it is enough to prove for any $z\in\BH$ that
\begin{equation}
\ev\left[\sup_{s\leq t}\left|\mathfs{M}_s^{N,\delta}(z)-\mathfs{M}_s^{M,\delta}(z)\right|^2\right]<\ep
,\end{equation}
and
\begin{equation}\label{eq:derconvergence}
\ev\left[\sup_{s\leq t}\left|\frac{\partial}{\partial z}\left(\mathfs{M}_s^{N,\delta}(z)-\mathfs{M}_s^{M,\delta}(z)\right)\right|^2\right]<\ep
.\end{equation}
Indeed
{\small \bae\label{eq:martingalediff}
&\ev\left[\sup_{s\leq t}\left|\mathfs{M}_s^{N,\delta}(z)-\mathfs{M}_s^{M,\delta}(z)\right|^2\right]\\
&\le 16\ev\left[\int_0^t\int_{N\pi}^{M\pi}|\slitM_x(\tilde{\mathfs{A}}_s^{M,\delta}(z))-\tilde{\mathfs{A}}_s^{M,\delta}(z)|^2dxds\right]\\
&+8\ev\left[\int_0^t\int_{-N\pi}^{N\pi}|(\slitN_x-id)(\tilde{\mathfs{A}}_s^{N,\delta}(z))-(\slitM_x-id)(\tilde{\mathfs{A}}_s^{M,\delta}(z))|^2dxds\right]
\eae }
For $s<t$
\bae\label{eq:indexpbound}
\ev\left[\int_{N\pi}^{M\pi}|\slitM_x(\tilde{\mathfs{A}}_s^{M,\delta}(z))-\tilde{\mathfs{A}}_s^{M,\delta}(z)|^2 dx\right] &\le\ev\left[\int_{N\pi}^{M\pi}|\slitM_x(\tilde{\mathfs{A}}_s^{M,\delta}(z))-\tilde{\mathfs{A}}_s^{M,\delta}(z)|^2\ind_{\{T>t\}}dx\right]\\
&+\ev\left[\int_{-M\pi}^{M\pi}|\slitM_x(\tilde{\mathfs{A}}_s^{M,\delta}(z))-\tilde{\mathfs{A}}_s^{M,\delta}(z)|^2\ind_{\{T<t\}}dx\right]\\
&\le \ep+ \frac{C_1(t,z)}{M},
\eae
where the last inequality is due to Lemma \ref{lem:uniformtail}, \ref{lem:pathpartition} and Corollary \ref{cor:stoptime} by taking $\tilde{N}$ large enough.

Next we attend to the second summand in the RHS of \eqref{eq:martingalediff} and  rewrite 
$$
\ev\left[\int_0^t\int_{-N\pi}^{N\pi}\Big|(\slitN_x-\text{id})(\tilde{\mathfs{A}}_s^{N,\delta}(z))-(\slitN_x-\text{id})(\tilde{\mathfs{A}}_s^{M,\delta}(z))+(\slitN_x-\slitM_x)(\tilde{\mathfs{A}}_s^{M,\delta}(z))\Big|^2dxds\right]
.$$
Next,
\bae
&\lim_{\tilde{N}\to\infty}\ev\left[\int_0^t\int_{-N\pi}^{N\pi}\Big|(\slitN_x-\slitM_x)(\tilde{\mathfs{A}}_s^{M,\delta}(z))\Big|^2dxds\right]\le\\
&\lim_{\tilde{N}\to\infty}2t\ev\left[\int_{0}^{\xi}\Big|(\slitN_x-\slitM_x)(0)\Big|^2dx\right]+\lim_{\tilde{N}\to\infty}8t\ev\left[\int_{\xi}^{N\pi}\Big|(\slitN_x-\text{id})(0)\Big|^2dx\right]
.\eae
By Equation \eqref{eq:tailest} for any $\tilde{N}$ large enough $\ev\left[\int_{\xi}^{N\pi}\Big|(\slitN_x-\text{id})(0)\Big|^2dx\right]\le \frac{c}{\xi}$, thus we can choose $\xi$ large enough such that $\frac{c}{\xi}<\epsilon$. By Lemma \ref{lem:conv_slit_fun} for the chosen $\xi$, $$\lim_{\tilde{N}\to\infty}2t\ev\left[\int_{0}^{\xi}\Big|(\slitN_x-\slitM_x)(0)\Big|^2dx\right]=0.$$
Lastly,
$$
|(\slitN_x-id)(\tilde{\mathfs{A}}_s^{N,\delta}(z))-(\slitN_x-id)(\tilde{\mathfs{A}}_s^{M,\delta}(z))|\le|\tilde{\mathfs{A}}_s^{N,\delta}(z)-\tilde{\mathfs{A}}_s^{M,\delta}(z)|\int_0^1|\slitN_x(\gamma(u))'-1|du,
$$
where $\gamma(u)$ is a curve linearly interpolating between $\tilde{\mathfs{A}}_s^{M,\delta}(z)$ and $\tilde{\mathfs{A}}_s^{N,\delta}(z)$. 

By  \eqref{eq:martingalediff}, \eqref{eq:indexpbound}, Lemma \ref{lem:3.3}, Jensen and Fubini 
$$
\ev\left[\sup_{s\leq t}\left|\mathfs{M}_s^{N,\delta}(z)-\mathfs{M}_s^{M,\delta}(z)\right|^2\right]\le C(t,z)\left(\int_0^tg(s)ds+\frac{1}{\tilde{N}}\right)
,$$
where $g(t)=\ev\left[\sup_{s\leq t}\left|\tilde{\mathfs{A}}_s^{N,\delta}(z)-\tilde{\mathfs{A}}_s^{M,\delta}(z)\right|^2\right]$.
By Gr{\"o}nwall's Lemma,
$$
		\ev\left[\sup_{s\leq t}\left|\tilde{\mathfs{A}}_s^{N,\delta}(z)-\tilde{\mathfs{A}}_s^{M,\delta}(z)\right|^2\right]=g(t)\leq\frac{C_3(t,z)}{\tilde{N}}e^{\int_{0}^{t} C_2(t,z)ds} = \frac{C_3(t,z)}{\tilde{N}}e^{t C_2(t,z)},
		$$
and the conclusion follows by taking $\tilde{N}$ large enough. 

Similar arguments yields the derivatives' convergence \eqref{eq:derconvergence}, by noting that for large enough $N$,
\bae
&\int_0^{N\pi} \left| \frac{d^2}{dz^2}\slitN_x(z)\right|^2dx=\\
&\int_0^{N\pi} \left| \frac{\lambda^4}{4^4N^6\left(2\frac{\lambda^2}{4N^2}+\cosh\left(\frac{-iz}{N}\right)-1\right)^2\left(1-\frac{\lambda^2}{4N^2}\tanh^2\left(\frac{z}{2N}\right)-\frac{\lambda^2}{4N^2}\right)}\right|dx,
\eae
which uniformly converges to zero as $N\to\infty$, by using the same estimates as \eqref{eq:blowpart} and \eqref{eq:tailpart} for any $y=\ima(z)>0$.
\end{proof}



\begin{proof}[Proof of Theorem \ref{thm:main}]
By equidistribution of the forward and backward SHL$(0)$ and CHL$^N$ for a fixed time $t$, it is enough to prove that the limit guarantied by Lemma \ref{lem:cauchyA}, denoted $\tilde{\mathfs{A}}^{\infty,\delta}_t$, is a c\'adl\'ag map satisfying conditions (1)-(4) in Definition \ref{def:BSHL}. (1) and (2) are immediate from Definition \ref{def:process}. To prove (3) it is enough to prove that for $0\leq s<t$ one has ${\tilde{\mathfs{A}}}^{N,\delta}_t\circ \left({\tilde{\mathfs{A}}^{N,\delta}_s}\right)^{-1}$ is measurable with respect to $\tilde \CF_{s,t} = \sigma\left(\tilde P|_{(s,t]\times[-N\pi,N\pi)}\right)$. We partition $A_{t,\pi N}$ from Definition \ref{def:process} into, $A_{s,\pi N}=\{(x_1,t_1),\dots ,(x_k,t_k)\}$ and $A_{t,\pi N}\setminus A_{s,\pi N}=\{(x_{k+1},t_{k+1}),\dots ,(x_n,t_n)\}$. Then,
\begin{equation*}
	\begin{aligned}
			{\tilde{\mathfs{A}}}^{N,\delta}_t\circ \left({\tilde{\mathfs{A}}^{N,\delta}_s}\right)^{-1}&=\slitN_{x_n}\circ\dots\circ\slitN_{x_{k+1}}\circ\slitN_{x_k}\circ\dots\circ\slitN_{x_1}\circ\left(\slitN_{x_1}\right)^{-1}\circ\dots\circ\left(\slitN_{x_k}\right)^{-1}\\
		&=\slitN_{x_n}\circ\dots\circ\slitN_{x_{k+1}}\\
	\end{aligned}
\end{equation*}
is a function of only points from $A_{t,\pi N}\setminus A_{s,\pi N}.$ Therefore it is measurable w.r.t. the sigma-algebra generated by Poisson points in $(s,t]\times[-N\pi,N\pi)$. 

We are left with proving (4). By Lemma \ref{lem:cauchyA} and \eqref{eq:backcylprocess}, we can take a sequence $N_k\to\infty$ such that 
\bae\label{eq:asconv}
{\tilde{\mathfs{A}}}^{\infty,\delta}_t=\lim_{k\to\infty} {\tilde{\mathfs{A}}}^{N_k,\delta}_t=z+\lim_{k\to\infty} \sum_{(s,x)\in A_{t,N_k}}{[\mathfs{S}^{N_k,\delta}_x(\tilde{\mathfs{A}}_{s^-}^{N_k,\delta}(z))-\tilde{\mathfs{A}}_{s^-}^{N_k,\delta}(z)]}\text{, a.s}
.\eae
Thus if $(s,x)\in A_{t}$ ,
\bae
{\tilde{\mathfs{A}}}^{\infty,\delta}_{s}-{\tilde{\mathfs{A}}}^{\infty,\delta}_{s-}=\lim_{k\to\infty} \mathfs{S}^{N_k,\delta}_x(\tilde{\mathfs{A}}_{s^-}^{N_k,\delta}(z))= \lim_{k\to\infty} \slit_x(\tilde{\mathfs{A}}_{s^-}^{N_k,\delta}(z))+\left(\mathfs{S}^{N_k,\delta}_x-\slit_x\right)(\tilde{\mathfs{A}}_{s^-}^{N_k,\delta}(z)).
\eae

By the continuity of $\slit_x$, Lemma \ref{lem:conv_slit_fun} and \eqref{eq:asconv}, we obtain 
\bae
{\tilde{\mathfs{A}}}^{\infty,\delta}_{s}-{\tilde{\mathfs{A}}}^{\infty,\delta}_{s-}= \slit_x(\tilde{\mathfs{A}}_{s^-}^{\infty,\delta}(z)) \text{, a.s}.
\eae
\end{proof}

\appendix \section{Proof of Lemma \ref{lem:3.3}}\label{appendix:deterministic}

\subsection{Proof of \eqref{eq:lem_3.3_1}}
\begin{proof}
Since \eqref{eq:lem_3.3_1} is maximized at $\ima(z)=0$, we will bound
\bae
\int_{-\pi N}^{\pi N} \left|\slitN_0(x)-x\right|^2 dx
\eae		
The function
\bae
g\circ f(x+iy)=\tan\left(\frac{x+iy}{2N}\right)=\frac{\sin(x/N)+i\sinh(y/N) }{cos(x/N)+\cosh(y/N)},
\eae
at the value $y=0$, is real valued, gets the value $0$ at $x=0$ and tends to infinity as $x\to\pi N$, with monotonically increasing derivative. There exists a $\xi$ such that for all $x>\xi$, 
$$\frac{\sin(x/N)}{1+\cos(x/N)}\ge \frac{x}{N}\ge  \frac{\lambda}{N}.$$
We use the scaling relation
\bae
\left| \varphi^{\delta}_0(x)-x \right|^2=\delta^2\left| \sqrt{x^2\left(\frac{1}{\delta^2}-1\right)-1}-\frac{x}{\delta} \right|^2
.\eae
By \cite[Appendix A]{berger2020growth}, we get that whenever $\frac{x}{\delta}<2$ or substituting for the value of $\delta_N$, $x\le\frac{\lambda}{N}$, 
\bae\label{eq:smallx}
\left| \varphi^{\delta}_0(x)-x \right|^2\le \frac{36\lambda^2}{N^2},
\eae
and whenever $x>\frac{\lambda}{N}$, 
\bae\label{eq:bigx}
\left| \varphi^{\delta}_0(x)-x \right|^2\le \left(\frac{\lambda}{2N}\right)^4\frac{1}{x^2}.
\eae
Thus by \eqref{eq:smallx} and \eqref{eq:bigx} there is a universal constant $c>0$, such that
\bae\label{eq:tailest}
&\int_{-\pi N}^{\pi N} \left|\slitN_0(x)-x\right|^2 dx\\
&=2\int_{0}^{\xi} \left|\slitN_0(x)-x\right|^2 dx+2\int_{\xi}^{\pi N} \left|\slitN_0(x)-x\right|^2 dx\\
&\le 2\int_{0}^{\xi} \left|2N\arctan\circ\tan(\frac{x+c}{2N})-x\right|^2 dx+2\int_{\xi}^{\pi N} \left|2N\arctan\circ\tan(\frac{x}{2N}+\frac{c}{Nx})-x\right|^2 dx\\
&\le 2\xi c^2+\frac{c^2}{\xi},
\eae	
where for the integral between $\xi$ and $\pi N$ we have used the monotonicity of the derivative of $g\circ f$.
\end{proof}
\subsection{Proof of \eqref{eq:lem_3.3_2}}\begin{proof}
		Without loss of generality suppose $z=iy$ for some $y>0$. 
		

		\item Next we prove \eqref{eq:lem_3.3_2}. For any $z\in \BT^N$,
		\bae
		\frac{d}{dz}\slitN(z)=\frac{1}{\sqrt{1-\frac{\lambda^2}{4N^2}-\frac{\lambda^2}{4N^2\tanh^2\left(\frac{z}{2N}\right)}}}.
		\eae
		We need to bound
		\bae
		&\int_0^{N\pi}\left|\frac{1}{\sqrt{1-\frac{\lambda^2}{4N^2}-\frac{\lambda^2}{4N^2\tanh^2\left(\frac{x+iy}{2N}\right)}}}-1\right|^2dx
		.\eae 
		Since there is a constant $c=c(y)$ such that $N\sin\left(\frac{y}{N}\right)>c(y)$, for all $N$, and
		$$
		N\tanh\left(\frac{x}{2}+i\frac{y}{2N}\right)=\frac{N\sinh(x)+iN\sin\left(\frac{y}{N}\right)}{\cosh(x)+\cos\left(\frac{y}{2N}\right)}
		,$$
		we get that there is another constant $\tilde{c}=\tilde{c}(y)>0$ for any $|x|\le \pi$ and any $N$ 
		\bae\label{eq:blowpart}
		\left|\sqrt{1-\frac{\lambda^2}{4N^2}-\frac{\lambda^2}{4N^2\tanh^2\left(\frac{x}{2}+i\frac{y}{2N}\right)}}\right|>\frac{1}{\tilde{c}(y)}.
		\eae
		Thus for every $\zeta>0$, there is a $C_\zeta$, such that 
		\begin{equation}\label{eq:tailpart}
		\int_0^{\zeta}\left|\frac{1}{\sqrt{1-\frac{\lambda^2}{4N^2}-\frac{\lambda^2}{4N^2\tanh^2\left(\frac{x+iy}{2N}\right)}}}-1\right|^2dx<C_\zeta.
		\end{equation}
		
		Now for any $x>\zeta$, there is another constant $C'_\zeta(y)$ such that by substituting $x/N$ with $x$ we obtain
		\bae
		&\int_{\zeta/N}^{\pi}N\left|\frac{1}{\sqrt{1-\frac{\lambda^2}{4N^2}-\frac{\lambda^2}{4N^2\tanh^2\left(\frac{x}{2}+\frac{iy}{2N}		\right)}}}-1\right|^2dx\le\\
		&\frac{C'_\zeta(y)}{N^3}\int_{0}^{\pi}\left| \frac{\lambda^2}{4}-\frac{\lambda^2}{4\tanh^2\left(\frac{x}{2}\right)}  \right|^2dx
		,\eae
		which converges to zero as $N\to\infty$ and we conclude the statement 
\end{proof}


\section{} \label{AppA}
Here we will provide the more calculation based proofs.

\begin{proof}[Proof of Lemma \ref{lem:rate}]	
		Let $z=x+iy$ for $x,y\in\mathbb{R}$ s.t. $y\to\infty$. Denote $\zeta:=e^{-iz/N}$ and note that $\zeta\to\infty$ as $y\to\infty$. Then, using Taylor series for approximation we get
					\begin{equation*} 
			\begin{aligned}
				&
				z \overset{f_N} {\to} e^{\frac{-iz}{N}} = \zeta \overset{g} {\to} i\frac{\zeta-1}{\zeta +1} = i \left( 1- \frac{2}{\zeta +1}\right) \\
				& 
				\overset{\slit ^\delta _0}{\to} \left(-\left(1-\frac{4}{\zeta+1}\right)^2 (1-\delta^2)-\delta^2 \right)^{\frac{1}{2}} =  \left(1-\frac{4}{\zeta+1} (1-\delta^2)+o\left(\frac{1}{\zeta}\right)\right)^{\frac{1}{2}} = \\
				&
				= i-\frac{2i}{\zeta +1}(1-\delta^2)+o\left(\frac{1}{\zeta}\right) 
				\overset{g^{-1}}{\to} \frac{2i-\frac{2i}{\zeta +1}(1-\delta^2)+o\left(\frac{1}{\zeta}\right)}{ \frac{2i}{\zeta +1}(1-\delta^2)+o\left(\frac{1}{\zeta}\right)} = \\
				&
				= \frac{1}{ \frac{1-\delta^2}{\zeta +1}+o\left(\frac{1}{\zeta}\right)} - 1 = \frac{\zeta +1}{1-\delta^2+o(1)}-1 \approx \frac{\zeta +1}{1-\delta^2}-1
				\overset{f_N^{-1}} {\to} iN\log\left(\frac{\zeta +1}{1-\delta^2}-1\right) = \\
				& = iN\log \left( \frac{\zeta}{1-\delta^2}\left(1+\frac{1}{\zeta}-\frac{1-\delta^2}{\zeta}\right)\right)= iN\log \left(\frac{\zeta}{1-\delta^2}\right)+iN\log (1+\frac{\delta^2}{\zeta})= \\
				&
				= iN\log(\zeta)-iN\log(1-\delta^2)+iN\frac{\delta^2}{\zeta}+o\left(\frac{1}{\zeta}\right) \\
			\end{aligned}
		\end{equation*}
	Substituting back $\zeta=e^{-iz/N}$ we obtain
	$$
	\slitN_0 (z)=z-iN\log (1-\delta^2)+iN\frac{\delta^2}{e^{-iz\backslash N}}+o\left(e^{-|z|}\right)
	$$
	Now take $\delta=\delta(N,\lambda)\sim \frac{\lambda}{2N}$ for $N$ sufficiently large and get the result:
	\begin{equation*}
			\slitN_0 (z) = z-iN\log\left(1-\frac{\lambda^2}{4N^2}\right)+i\frac{\lambda^2}{4Ne^{-iz\backslash N}}+o\left(e^{-|z|}\right) = z+i\frac{\lambda^2}{4N}+o\left(\frac{1}{N}\right)+o\left(e^{-|z|}\right)
	\end{equation*}
	\end{proof}

	\begin{proof}[Proof of Lemma \ref{lem:conv_slit_fun}]
		Let $z\in\BH$ and consider $ \slitN_0(z)$ for $\delta=\delta(N,\lambda)$. Then, using Taylor approximation:
			\begin{equation*} 
				\begin{aligned}
					z
					&
					\overset{f_N} {\to} e^{\frac{-iz}{N}} \approx 1-\frac{iz}{N} \\
					&
					\overset{g}{\to} \frac{i(\frac{-iz}{N})}{2-\frac{iz}{N}} =
					\frac{z}{2N} \left( \frac{1}{1-iz \backslash 2N} \right)= 
					\frac{z}{2N} \left(1+\frac{iz \backslash2N}{1-iz \backslash 2N}  \right) =
					\frac{z}{2N} + O\left(\frac{1}{N^2}\right) \\
					&
					\overset{\slit^{\delta (N,\lambda)}_0}{\longrightarrow} \left[ \left( \frac{z}{2N}+O\left(\frac{1}{N^2}\right) \right) ^2 \left( 1- \left( 1-\frac{2}{1+e^{\lambda \backslash N}} \right) ^2 \right) - \left( 1-\frac{2}{1+e^{\lambda \backslash N}} \right) ^2\right]^{\frac{1}{2}}  = \\
				  	&
					= \left[ \left( \frac{z}{2N}+O\left(\frac{1}{N^2}\right) \right) ^2 \left( 1-\left( 1-\frac{1}{1+\lambda 	\backslash 2N} \right) ^2 \right) - \left( 1-\frac{1}{1+\lambda \backslash 2N} \right) ^2\right]^{\frac{1}{2}}= \\	
					&
					= \left[ \left( \frac{z}{2N}+O\left(\frac{1}{N^2}\right) \right) ^2 \left( 1-\left(\frac{\lambda}{2N} \right) ^2 \right) -\left( \frac{\lambda}{2N} \right) ^2\right]^{\frac{1}{2}} = \\
					&
					= \left[ \left( \frac{z}{2N}+O\left(\frac{1}{N^2}\right)\right) ^2 - \left( \frac{z\lambda}{4 N^2} + O\left(\frac{1}{N^3}\right) \right) ^2  -  \left( \frac{\lambda}{2N} \right) ^2\right]^{\frac{1}{2}} = \\
					&
					= \left[ \left( \frac{z}{2N}\right) ^2+O\left(\frac{1}{N^3}\right) - \left( \frac{\lambda}{2N} \right) ^2\right]^{\frac{1}{2}} = 
			\frac{1}{2N} \sqrt{z^2- \lambda ^2} + O\left(\frac{1}{N^2}\right)\\
			\end{aligned}
			\end{equation*}
	Set $w := z^2 - \lambda ^2$ and so we'll get
	$$ \slit ^ {\delta(N,\lambda)}_0\circ g\circ f_N (z) \approx \frac{1}{2N} w^{\frac{1}{2}}+O\left(\frac{1}{N^2}\right) .$$
	Coming back to our computations we obtain
	
	\begin{equation*}
		\begin{aligned}
			\frac{1}{2N} w^{\frac{1}{2}} 
			&
			\overset{g^{-1}}{\to} \frac{i+\frac{1}{2N} w^{\frac{1}{2}}+O\left(\frac{1}{N^2}\right)}{i-\frac{1}{2N} w^{\frac{1}{2}}+O\left(\frac{1}{N^2}\right)}
			=1+\frac{ \frac{1}{N}w^{\frac{1}{2}} }{i -\frac{1}{2N} w^{\frac{1}{2}}+O\left(\frac{1}{N^2}\right)} = 1-\frac{i\frac{1}{N}w^{\frac{1}{2}} }{1 + i\frac{1}{2N} w^{\frac{1}{2}}+O\left(\frac{1}{N^2}\right)}=  \\
			&
			= 1- i\frac{1}{N}w^{\frac{1}{2}} \left( 1- i\frac{1}{2N}w^{\frac{1}{2}} +O\left(\frac{1}{N^2}\right) \right)
			=1-i\frac{1}{N}w^{\frac{1}{2}} + O\left(\frac{1}{N^2}\right)+O\left(\frac{1}{N^2}\right) \\
			&
			\overset{f_N^{-1}} {\to} iN \log \left(1-iw^{\frac{1}{2}} \backslash N + O\left(\frac{1}{N^2}\right) \right) \approx iN \left(iw^{\frac{1}{2}} \backslash N + O\left(\frac{1}{N^2}\right)\right) = \\
			&
			= w^{\frac{1}{2}} + O\left(\frac{1}{N}\right) \\
		\end{aligned}
	\end{equation*}
	Then, there is a constant $C(z)$ s.t.
		$$
		\left|\slitN_0(z)-\sqrt{z^2-1}\right| \leq \frac{C(z)}{N}
		$$
	\end{proof}
	
\subsection*{Acknowledgements}

We would like to thank Noam Berger for insightful discussions that were greatly instrumental for the definition of the CHL$^N$ and the proof of the main Theorem.

	\bibliography{career}
	\bibliographystyle{plain}

\end{document}